\newtheorem{theorem}{Theorem}[section]
\newtheorem{corollary}{Corollary}
\newtheorem*{main}{Main Theorem}
\newtheorem{lemma}[theorem]{Lemma}
\newtheorem{proposition}{Proposition}
\theoremstyle{definition}
\newcommand{\Fi}{\mathrm{ F}}
\newcommand{\gi}{\mathrm{ g}}
\newcommand{\R}{{\mathbb R}}
\title[Lyapunov instability in Lagrangian dynamics] 
      {On the Lyapunov instability in Lagrangian dynamics}
\author[J. M. Burgos  and  M. Paternain]{}
\subjclass{37J25, 70H14, 70K20.}
\keywords{Lagrangian dynamics, Lyapunov instability, Magnetostatic field, Newtonian dynamics, quasi-homogeneous potential.}
 \email{burgos@math.cinvestav.mx}
 \email{miguel@cmat.edu.uy}
\thanks{The first author has a CONACYT research fellowship. The second author is supported by the FCE-ANII-135352 grant.}
\thanks{$^*$ Corresponding author: xxxx}
\begin{document}
\maketitle

\centerline{\scshape Juan M. Burgos $^*$}
\medskip
{\footnotesize

 \centerline{Departamento de Matem\'aticas, CINVESTAV-\,CONACYT.}
   \centerline{Av. Instituto Polit\'ecnico Nacional 2508, Col. San Pedro Zacatenco, 07360,}
   \centerline{Ciudad de M\'exico, M\'exico.}
} 

\medskip

\centerline{\scshape Miguel Paternain}
\medskip
{\footnotesize
 \centerline{Centro de Matem\'atica, Facultad de Ciencias, Universidad de la Rep\'ublica.}
   \centerline{Igu\'a 4225, 11400, Montevideo, Uruguay.}
}

\bigskip

 \centerline{(Communicated by the associate editor name)}

\begin{abstract}
In the context of mechanical Lagrangian dynamics, we prove a new Lyapunov instability criterion for a non strict local minimum equilibrium point of a smooth potential where the sufficient condition for instability is the existence of a smooth solution of a certain linear PDE derived from the mechanical Lagrangian governing the dynamics. In the presence of a magnetostatic field, we also give an additional sufficient condition for the motion of a charged particle to be Lyapunov unstable.
\end{abstract}

\section{Introduction}

In this paper we give a new criterion for the instability of an equilibrium point of the Lagrangian dynamics of

\begin{equation}\label{Lagrangian}
L(x,v)= Q_x(v)+\mu_x(v)-U(x),\qquad (x,v)\in TN.
\end{equation}
where $Q$ is a positive definite $C^{2}$ quadratic form, $\mu$ is a $C^{2}$ differential one form and the potential $U$ is a nonnegative real valued $C^{2}$ function such that $M=U^{-1}(0)$ is a nonempty closed set.

Every point in $M\times\{\bf 0\}$ is an equilibrium point and Routh's Theorem \cite{Routh} implies that every isolated point of this set is Lyapunov stable for being a strict local minimum of the potential. This theorem states that a strict minimum of a potential $U$ is a Lyapunov stable equilibrium point for the Lagrangian dynamics of the Lagrangian \eqref{Lagrangian}. For mechanical Lagrangians, the theorem was stated by Lagrange in \cite{Lagrange} and proved by Dirichlet in \cite{Dirichlet}.

Even if the equilibrium point in $M\times \{\bf 0\}$ were not isolated, it could be Lyapunov stable. The first example of this phenomenon was given by Painlev\'e in 1904 \cite{Pai} in Newtonian dynamics for a single degree of freedom and a more striking example by Laloy in \cite{Laloy1}, again in the context of Newtonian dynamics but with two degrees of freedom.

The Lagrange-Dirichlet converse for real analytic potentials is a long standing open conjecture posed by Lyapunov in \cite{Ly}. A related open conjecture posed in the form of a problem by Arnold in \cite{Arnold}, problem 1971-4, concerns the instability of an isolated non minimum critical point of the potential in the context of Newtonian dynamics.

Starting from Lyapunov \cite{Ly} and continuing with \cite{Ref1}, \cite{Ha}, \cite{Ref2}, \cite{Ref3}, \cite{Ref4}, \cite{Ref5}, \cite{Ref6}, \cite{Ref7}, many partial results have been given towards the solution of the Arnold's conjecture and their common feature is that the Lyapunov instability criteria involves the lack of a local minimum at the origin of the first nonzero jet of the potential. However, these instability criteria are not sufficient neither to prove the general case nor to prove the case of a non strict local minimum of the potential.

Concerning the case of a non strict local minimum of the potential, in the recent paper \cite{BMP} it was proved the instability on every point of a hypersurface where the potential reaches a minimum in the context of Newtonian dynamics. In this paper, besides of being in the context of Lagrangian dynamics, none manifold structure is assumed in the set $M$ and in particular it could have singularities.


Consider the mechanical Lagrangian:
\begin{equation}\label{Mech_Lagrangian}
L_{mech}(x,v)= Q_x(v)-U(x),\qquad (x,v)\in TN
\end{equation}
where $Q$ is a positive definite $C^{2}$ quadratic form and $U$ is a nonnegative real valued $C^{2}$ function. Define the Riemannian metric $\rho$ on $N$ such that $Q_x(v)=\Vert v\Vert_x^{2}/2$ and denote by $\nabla_\rho$ the gradient with respect to this metric.

In this paper, we prove the following theorem

\begin{theorem}\label{main1}
Consider a zero potential point $p$ and suppose there is a neighborhood $V$ of $p$ in $N$ and a real valued $C^{3}$ regular function $f$ on $V$ such that
$$dU(\,\nabla_\rho f\,)=O(U).$$
Then, $(p,{\bf 0})$ is a Lyapunov unstable equilibrium point of the Lagrangian dynamics of \eqref{Mech_Lagrangian}.
\end{theorem}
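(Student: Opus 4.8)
The plan is to run a Chetaev–type argument built around the function $\Phi(x,v)=\langle v,\nabla_\rho f\rangle_\rho=df_x(v)$, exploiting that low total energy forces small speeds and hence small forces. First I would normalise $f(p)=0$ and record two facts: the energy $E(x,v)=\tfrac12\|v\|_x^2+U(x)$ is conserved along the dynamics, and since $f$ is a \emph{regular} $C^3$ function and $\bar V$ may be taken compact, the vector field $X:=\nabla_\rho f$ satisfies $m:=\inf_{\bar V}\|X\|_\rho>0$ and has a bounded covariant Hessian. The Euler–Lagrange equations of \eqref{Mech_Lagrangian} read $\nabla_{\dot x}\dot x=-\nabla_\rho U$, so differentiating $\Phi$ along a trajectory yields
\[
\dot\Phi=\langle\nabla_{\dot x}\dot x,X\rangle_\rho+\langle\dot x,\nabla_{\dot x}X\rangle_\rho=\mathrm{Hess}_\rho f(\dot x,\dot x)-dU(\nabla_\rho f).
\]
This identity is the whole engine of the proof, and the hypothesis $dU(\nabla_\rho f)=O(U)$ controls exactly its second term.

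Next I would establish the driving estimate. On $\bar V$ put $H=\sup\|\mathrm{Hess}_\rho f\|$ and let $C'$ satisfy $|dU(\nabla_\rho f)|\le C'U$. Along any orbit of energy $E$ staying in $V$ one has $\|\dot x\|^2=2(E-U)\le 2E$ and $U\le E$, whence $|\dot\Phi|\le (2H+C')E=:C''E$. The decisive point is the scaling of the initial data: take the family of initial conditions $\big(p,\ \epsilon\,X(p)/\|X(p)\|\big)$ with $\epsilon\downarrow 0$, which tends to $(p,\mathbf 0)$, has energy $E=\epsilon^2/2$ (since $U(p)=0$) and initial value $a:=\Phi(0)=\epsilon\|X(p)\|\ge m\sqrt{2E}$. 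Integrating $|\dot\Phi|\le C''E$ gives $\Phi(t)\ge a/2$ on the window $[0,t^\ast]$ with $t^\ast=a/(2C''E)\sim 1/\sqrt E$, so that $f(x(t^\ast))-f(p)=\int_0^{t^\ast}\Phi\,dt\ge \tfrac{a}{2}t^\ast=a^2/(4C''E)\ge m^2/(2C'')=:\kappa>0$. Thus $f$ grows by at least the fixed amount $\kappa$, \emph{independent of} $\epsilon$.

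Finally I would turn the fixed gain of $f$ into genuine escape. Fix an instability radius $\epsilon_0>0$ so small that the $\epsilon_0$–ball about $p$ lies in $V$ and $\mathrm{Lip}_{\bar V}(f)\,\epsilon_0<\kappa$. If the orbit above remained within this ball for all $t\le t^\ast$, then all preceding estimates would hold throughout and force $f(x(t^\ast))-f(p)\ge\kappa$, contradicting $|f(x(t^\ast))-f(p)|\le \mathrm{Lip}(f)\,\mathrm{dist}_\rho(x(t^\ast),p)\le \mathrm{Lip}(f)\,\epsilon_0<\kappa$; hence the configuration must leave the $\epsilon_0$–ball, and so the orbit in $TN$ leaves a fixed neighbourhood of $(p,\mathbf 0)$. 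Since the initial data approach $(p,\mathbf 0)$ as $\epsilon\to0$ while every such orbit escapes, the equilibrium is Lyapunov unstable.

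The main obstacle, and the heart of the matter, is the term $\mathrm{Hess}_\rho f(\dot x,\dot x)$: its sign is uncontrolled and it could a priori drive $\Phi$ downward. The resolution is that in the low–energy regime this term is only $O(E)$, while aligning the initial velocity with $X$ makes $\Phi(0)$ of order $\sqrt E$; the resulting displacement $a^2/E$ is then scale–invariant, so the drift along $\nabla_\rho f$ survives the limit $\epsilon\to0$ — this is precisely where the hypothesis $dU(\nabla_\rho f)=O(U)$ (rather than merely $O(\sqrt U)$) is needed. A secondary technical point is making the estimates persist long enough; this is dispatched by the dichotomy ``either the orbit leaves the ball early, which is escape, or the bounds hold up to $t^\ast$, which is a contradiction,'' an argument that also bypasses any discussion of the maximal interval of existence.
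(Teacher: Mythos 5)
Your proof is correct, but it follows a genuinely different route from the paper's. The paper rescales time, setting $x_\varepsilon(\tau)=x^{\varepsilon}(\tau/\varepsilon)$ so that the orbits become solutions of a singular family of Lagrangians; it then builds an adapted coordinate system $(z,y)$ with $z=f$ out of the flow of $\nabla_\rho f/\Vert\nabla_\rho f\Vert^2$, shows via the Euler--Lagrange equation in these coordinates that $\ddot z_\varepsilon$ is uniformly bounded (this is exactly where $dU(\nabla_\rho f)=O(U)$ cancels the $\varepsilon^{-2}$), and extracts by Arzel\`a--Ascoli a $C^1$ limit curve lying in $M$ along which $f$ has strictly positive derivative at $0$. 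You instead run a direct Chetaev-type differential inequality on $\Phi=df(\dot x)$ in the original time variable: the identity $\dot\Phi=\mathrm{Hess}_\rho f(\dot x,\dot x)-dU(\nabla_\rho f)$ together with energy conservation gives $|\dot\Phi|=O(E)$ while $\Phi(0)\sim\sqrt E$, so $f$ gains a fixed amount $\kappa$ over a window of length $\sim 1/\sqrt E$, and the Lipschitz dichotomy forces escape from a fixed ball. The scaling insight is the same in both arguments (velocity $O(\varepsilon)$ along $\nabla_\rho f$, drift surviving over times $O(1/\varepsilon)$ precisely because the forcing is $O(U)=O(\varepsilon^2)$), but your version is shorter, entirely elementary, and quantitative --- it produces an explicit escape time and escape distance and needs no compactness or limit-curve extraction. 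What the paper's machinery buys in exchange is the limit curve in $M$ itself and, more importantly, the coordinate framework that is reused verbatim for the magnetic Main Theorem, where the extra hypothesis $\Vert\iota_{\nabla_\rho f}\,d\mu\Vert=O(U^{1/2})$ controls the additional Lorentz term $\varepsilon^{-1}\gi^{00}\Fi_{0b}\dot y^b_\varepsilon$ in the $z$-equation. (Your identity for $\dot\Phi$ would in fact also absorb that term, since the magnetic force contributes $\iota_{\dot x}\,d\mu$ paired with $\nabla_\rho f$, which is $O(\sqrt E)\cdot O(\sqrt U)=O(E)$ under that hypothesis, so your approach extends as well; but for Theorem \ref{main1} as stated your argument is complete as written.)
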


In Theorem \ref{main1} the instability is obtained in those cases in which there exists a smooth
solution $f$ of a linear partial differential equation of the
form
\begin{equation}\label{PDE}
\sum_{a,b=1}\,\gi^{ab}\,\partial_a U\,\partial_b f = O(U).
\end{equation}

Corollaries \ref{Generalization} and \ref{Generalization2} provide a family of potentials $U$ for which such solutions exist, even in the neighborhood of points belonging to a subset of the singular set of $U^{-1}(0)$. Thus we can show instability in cases where previous methods do not.

Here, $(\gi^{ab})$ is the inverse matrix of $(\gi_{ab})$ whose terms are the coefficients of the metric $\rho$ expressed with respect to some coordinate neighborhood of the point $p$ mentioned in the theorem.

Theorem \ref{main1} generalizes the result in \cite{BMP} for Newtonian dynamics. Specifically, in contrast to \cite{BMP} where all of the critical points are regular, Theorem \ref{main1} proves the instability of certain singular critical points of the potential in the context of Newtonian dynamics, see corollary \ref{Generalization2} below. Theorem \ref{main1} also generalizes the immediate codimension $k>1$ version of \cite{BMP}, now in the Lagrangian dynamics of the mechanical Lagrangian \eqref{Mech_Lagrangian}. Section \ref{Corollary_proof} is devoted to the proof of the following result:

\begin{corollary}\label{Generalization}
Consider a potential of the form $U=g\circ F$ where $g:\R^{k}\rightarrow\R$ is a $C^{2}$ nonnegative function vanishing only at the origin, $F=(F_1,\ldots F_k):N\rightarrow\R^{k}$ is a $C^{3}$ function with $k<\dim N$ such that ${\bf 0}$ is a regular value, the vector fields $\nabla_{\rho}\, F_i\,/\,\Vert\,\nabla_{\rho}\, F_i\,\Vert^2$ pairwise commute and 
$$\left\langle\, \nabla_{\rho}\, F_i\, , \, \nabla_{\rho}\, F_j\,\right\rangle_\rho\equiv 0,\qquad i\neq j.$$
Then, every point of $M\times \{ {\bf 0} \}$ is a Lyapunov unstable equilibrium point of the Lagrangian dynamics of \eqref{Mech_Lagrangian}.
\end{corollary}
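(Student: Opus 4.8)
The plan is to produce, near an arbitrary point $p\in M$, a regular $C^{3}$ function $f$ satisfying the hypothesis $dU(\nabla_\rho f)=O(U)$ of Theorem \ref{main1}, and then to invoke that theorem at every point of $M\times\{\mathbf{0}\}$. Since $\mathbf{0}$ is a regular value of $F$, the gradients $\nabla_\rho F_1,\dots,\nabla_\rho F_k$ are linearly independent and nonzero on a neighborhood $V$ of $p$, so the normalized fields $X_i=\nabla_\rho F_i/\Vert\nabla_\rho F_i\Vert^{2}$ are well-defined $C^{2}$ vector fields there. The orthogonality hypothesis gives the clean duality $X_i(F_j)=\langle\nabla_\rho F_j,\nabla_\rho F_i\rangle_\rho/\Vert\nabla_\rho F_i\Vert^{2}=\delta_{ij}$, while the commutation hypothesis $[X_i,X_j]=0$ makes the distribution $\mathcal{E}=\mathrm{span}\{X_1,\dots,X_k\}=\mathrm{span}\{\nabla_\rho F_1,\dots,\nabla_\rho F_k\}$ involutive.

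First I would rectify the commuting fields simultaneously (the flow-box theorem for commuting vector fields, i.e.\ Frobenius): shrinking $V$, there are coordinates $(y_1,\dots,y_n)$ with $X_i=\partial/\partial y_i$ for $i\le k$, and, because $X_i(F_j)=\delta_{ij}$, the triangular change of variables lets one arrange $y_i=F_i$ for $i\le k$ while keeping $\partial/\partial y_i=X_i$. In these coordinates $U=g(y_1,\dots,y_k)$ depends only on the first $k$ variables, and the orthogonality of the $\nabla_\rho F_i$ block-diagonalizes the upper-left $k\times k$ part of the metric, decoupling the first $k$ directions from the remaining ones.

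Since $k<\dim N$, I would then set $f=y_{k+1}$, which is a coordinate and hence regular. To verify the hypothesis of Theorem \ref{main1}, write $\nabla_\rho U=\sum_{i\le k}(\partial_i g)(F)\,\nabla_\rho F_i$, so that in the notation of \eqref{PDE},
\begin{equation*}
dU(\nabla_\rho f)=\langle\nabla_\rho U,\nabla_\rho f\rangle_\rho=\sum_{i\le k}(\partial_i g)(F)\,\langle\nabla_\rho F_i,\nabla_\rho y_{k+1}\rangle_\rho .
\end{equation*}
Each inner product equals $dy_{k+1}(\nabla_\rho F_i)$; since $\nabla_\rho F_i\in\mathcal{E}=\mathrm{span}\{\partial_{y_1},\dots,\partial_{y_k}\}$ and $k+1>k$, every term vanishes. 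Hence $dU(\nabla_\rho f)\equiv 0=O(U)$, and Theorem \ref{main1} yields Lyapunov instability of $(p,\mathbf{0})$; as $p\in M$ is arbitrary, the corollary follows.

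The step I expect to be the main obstacle is regularity. The construction of $f$ rests on the simultaneous rectification of the fields $X_i$, which are only $C^{2}$ because the metric $\rho$ is $C^{2}$; a priori this yields $f$ of class $C^{2}$ rather than the $C^{3}$ demanded by Theorem \ref{main1}. The commutation hypothesis is precisely what guarantees the existence of the common first integral $f$ through involutivity of $\mathcal{E}$, but upgrading its regularity to $C^{3}$—exploiting that the level sets of the $C^{3}$ map $F$ are $C^{3}$ and that orthogonality splits the metric so that $f$ can be built on a $C^{3}$ transversal—is the delicate technical point, together with checking that $V$ and $f$ can be chosen consistently as $p$ ranges over $M$.
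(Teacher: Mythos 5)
Your proposal is correct and follows essentially the same route as the paper: the paper realizes your ``simultaneous rectification'' explicitly as $\Psi(\boldsymbol{r},y)=\phi^{1}_{r_1}\circ\cdots\circ\phi^{k}_{r_k}(\psi(y))$, a flow-box for the commuting fields $X_i$ based on a parameterization of $M$ (with $F\circ\Psi=\boldsymbol{r}$ coming from the orthogonality, exactly your $X_i(F_j)=\delta_{ij}$), and likewise takes $f$ to be a transverse coordinate $y^{j}$, concluding $dU(\nabla_\rho f)\equiv 0$. The $C^{3}$-versus-$C^{2}$ regularity issue you flag is genuine but is equally present in the paper's own construction (its $\Psi$ is only $C^{2}$), and the uniformity of the choice of $V$ and $f$ over $M$ is not needed since instability is verified point by point.
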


In the previous corollary, $M$ is a codimension $k$ submanifold of $N$ with dimension at least one.

Another interesting application of Theorem \ref{main1} is provided by a quasi-homogeneous potential in the context of Newtonian dynamics in $\R^{n}$:
$$U(\lambda^{\alpha_1}x_1,\ldots, \lambda^{\alpha_n}x_n)= \lambda^{r}\,U(x_1,\ldots, x_n),\quad \lambda\geq 0.$$
By the Euler's Theorem, the function
$$f({\bf x})= \alpha_1\frac{x_1^{2}}{2}+\ldots+\alpha_n\frac{x_n^{2}}{2}$$
verifies the hypothesis of Theorem \ref{main1} on every point where it is regular for
$$dU(\nabla f)= \left\langle \nabla U, \nabla f\right\rangle= \sum_{i=1}^{n} \alpha_i\, x_i\, \partial_i U= r\, U.$$
Denote by $V_{\boldsymbol{\alpha}}$ the set where the function $f$ is regular, i.e. the set of points ${\bf x}$ where $\alpha_i x_i\neq 0$ for some index $i$. Here $\boldsymbol{\alpha}$ denotes the vector $(\alpha_1,\ldots,\alpha_n)$. We have proved:

\begin{corollary}\label{Generalization2}
Consider a quasi-homogeneous potential $U$ with vector $\boldsymbol{\alpha}$. Then, every point in $(U^{-1}(0)\cap V_{\boldsymbol{\alpha}})\times \{{\bf 0}\}$ is a Lyapunov unstable equilibrium point of the Newtonian dynamics.
\end{corollary}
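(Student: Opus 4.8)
The plan is to apply Theorem \ref{main1} directly, using the explicit candidate function $f$ already produced by the generalized Euler identity. In the Newtonian setting the kinetic term is $Q_x(v)=\Vert v\Vert^{2}/2$ for the Euclidean norm, so the metric $\rho$ is flat and $\nabla_\rho$ coincides with the standard gradient $\nabla$ on $\R^{n}$. It therefore suffices to verify, for an arbitrary point $p\in U^{-1}(0)\cap V_{\boldsymbol{\alpha}}$, the three hypotheses of Theorem \ref{main1}: that $p$ is a zero potential point, that $f$ is $C^{3}$ and regular on a whole neighborhood of $p$, and that $dU(\nabla f)=O(U)$ there.

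First I would fix $p\in U^{-1}(0)\cap V_{\boldsymbol{\alpha}}$. Since $p\in U^{-1}(0)$ we have $U(p)=0$, so $p$ is a zero potential point. By the definition of $V_{\boldsymbol{\alpha}}$ there is an index $i$ with $\alpha_i x_i\neq 0$ at $p$, hence $\nabla f(p)=(\alpha_1 x_1,\ldots,\alpha_n x_n)\neq {\bf 0}$. As $f$ is a quadratic polynomial it is smooth, and regularity is an open condition; indeed $V_{\boldsymbol{\alpha}}$ is itself open, being the complement of the closed set where all $\alpha_i x_i$ vanish. Thus $f$ is $C^{3}$ and regular on some neighborhood $V\subseteq V_{\boldsymbol{\alpha}}$ of $p$, which is exactly the regular neighborhood demanded by the theorem.

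Next I would invoke the key identity. Differentiating the quasi-homogeneity relation $U(\lambda^{\alpha_1}x_1,\ldots,\lambda^{\alpha_n}x_n)=\lambda^{r}U({\bf x})$ in $\lambda$ at $\lambda=1$ gives the generalized Euler relation $\sum_i\alpha_i x_i\,\partial_i U=r\,U$, which is precisely the computation displayed above the statement. Consequently
$$dU(\nabla f)=\left\langle\nabla U,\nabla f\right\rangle=\sum_{i=1}^{n}\alpha_i\,x_i\,\partial_i U=r\,U,$$
so $dU(\nabla f)=O(U)$ holds trivially, being an exact scalar multiple of $U$. With all three hypotheses verified, Theorem \ref{main1} yields that $(p,{\bf 0})$ is Lyapunov unstable, and since $p$ was arbitrary in $U^{-1}(0)\cap V_{\boldsymbol{\alpha}}$ the corollary follows.

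I expect no substantial obstacle in this deduction: the entire analytic content has been front-loaded into Theorem \ref{main1} and into the Euler relation, so the corollary reduces to checking that the quadratic $f$ is regular exactly on $V_{\boldsymbol{\alpha}}$ and that Euler's identity makes $dU(\nabla f)$ an exact multiple of $U$. The only point requiring a word of care is the passage from regularity at the single point $p$ to regularity on a neighborhood $V$, but this is immediate from the continuity of $\nabla f$ and the openness of $V_{\boldsymbol{\alpha}}$. Any genuine difficulty lies hidden in the proof of Theorem \ref{main1}, not in this application of it.
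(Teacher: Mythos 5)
Your proposal is correct and follows essentially the same route as the paper: the paper likewise takes $f(\mathbf{x})=\sum_i \alpha_i x_i^{2}/2$, uses the Euler identity for quasi-homogeneous functions to get $dU(\nabla f)=r\,U=O(U)$, and applies Theorem \ref{main1} at the points of $V_{\boldsymbol{\alpha}}$ where $f$ is regular. Your added remarks on the openness of $V_{\boldsymbol{\alpha}}$ and the flatness of the metric in the Newtonian case are harmless elaborations of details the paper leaves implicit.
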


The interesting fact about the previous corollary is that it includes singular points of $U^{-1}(0)$. As an example consider the potential
$$U(x,y,z)= (x^{2}-y^{2}z)^{2}$$
where the minimum is attained at the \textit{Whitney umbrella}, the real algebraic variety $x^{2}-y^{2}z=0$, where every point in the $z$ axis with $z\geq 0$ is singular. Here, the nonzero vector $\boldsymbol{\alpha}$ lies in the linear span of $(1,1,0)$ and $(1,0,2)$. By the Corollary, every point of this variety distinct from the origin is unstable.

As another interesting example, consider the potential
$$U(x,y,z)=(x^{2}z^{2}+x^{3}-y^{2})^{2}$$ where the minimum is attained at the \textit{Kolibri}, the real algebraic surface $y^{2}= x^{2}z^{2}+x^{3}$. Here $\boldsymbol{\alpha}$ is nonzero and lies in the span of $(2,3,1)$. Again, every point of this variety distinct from the origin is unstable.


The situation is completely different for charged particles under the presence of magnetism mainly due to its general property of stabilization. In effect, even with a potential verifying the instability hypothesis posed in \cite{BMP}, it may happen that every point in $M\times\{\bf 0\}$ were Lyapunov stable. As an example of this phenomenon, consider the following potential energy and magnetic potential in the context of Newtonian dynamics in $\R^{3}$:
\begin{equation}\label{example}
U=z^{2},\qquad \mu= x\,dy.
\end{equation}
In this case, $M$ is the $z=0$ plane and every orbit of a point in this plane with initial velocity $v$ projects into the plane as a circle whose radius is the norm of $v_{\Vert}$ and projects into the $z$ axis as a linear pendulum motion with amplitude the norm of $v_{\perp}$. Here, $v=v_{\Vert}+v_{\perp}$ is the orthogonal splitting where the first term is in the $z=0$ plane and the second is in the $z$ axis. In particular, the equilibrium points in the plane $z=0$ are Lyapunov stable. This simple example also shows that Routh's converse is false in general.

Because the result in \cite{BMP} is a particular case of Corollary \ref{Generalization} we conclude that in presence of magnetism this corollary does not hold without further hypotheses on the magnetic field. Since the potential in \eqref{example} is homogeneous, this also shows that Corollary \ref{Generalization2} does not hold either without further hypotheses. In effect, taking the vector $\boldsymbol{\alpha}=(1,1,1)$, Corollary \ref{Generalization2} asserts that every point $(p,{\bf 0})$ with $p$ in the $z=0$ plane and distinct form the origin is unstable which is clearly false under the presence of magnetism in view of example \eqref{example}.

We extend Theorem \ref{main1} for charged particles under the presence of an additional magnetostatic field and prove the following

\begin{main}\label{main}
Consider a zero potential point $p$ and suppose there is a neighborhood $V$ of $p$ in $N$ and a real valued $C^{3}$ regular function $f$ on $V$ such that
$$dU(\,\nabla_\rho f\,)=O(U),\qquad \Vert\,\iota_{\nabla_\rho f}\,d\mu\,\Vert_{TN^{*}}= O(U^{1/2}).$$
Then, $(p,{\bf 0})$ is a Lyapunov unstable equilibrium point of the Lagrangian dynamics of \eqref{Lagrangian}.
\end{main}


Let us give now a direct application of the previous theorem. As in the previous example, consider the potential $U=z^2$ in $\R^3$ in the context of Newtonian dynamics but with the magnetic potential $\mu=z\,dy$ instead. Now the magnetic field is given by $d\mu=dz\,dy$. Again, every point in the $z=0$ plane is an equilibrium point but now, in contrast with the previous example, they are all Lyapunov unstable. In effect, consider the function $f=x$ whose gradient equals the first canonical vector $e_1$ and note that
$$dU(\,\nabla f\,)=0,\qquad \iota_{\nabla f}\,d\mu= {\bf 0}.$$

The following corollary constitutes the corresponding extension of Corollary \ref{Generalization} and will be proved in section \ref{Corollary3_proof}. It gives a family of examples of Theorem \ref{main}.

\begin{corollary}\label{Generalization3}
Under the hypotheses in Corollary \ref{Generalization} with a magnetic potential defined as the pullback of any one-form in $\R^k$ by $F$, that is to say
\begin{equation}\label{form_example}
\mu=F^*(\omega),\qquad \omega\in \Omega^1(\R^k),
\end{equation}
every point of $M\times \{ {\bf 0} \}$ is a Lyapunov unstable equilibrium point of the Lagrangian dynamics of \eqref{Lagrangian}. Moreover, if $d\omega$ is non null at ${\bf 0}$, then $d\mu$ is non null at every point in $M$. In particular for $k$ greater than one, there are magnetic potentials of the form \eqref{form_example} such that the corresponding magnetic field is non null at every point in $M$.
\end{corollary}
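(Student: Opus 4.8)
The plan is to verify the two hypotheses of the Main Theorem for the very function $f$ already produced in the proof of Corollary \ref{Generalization}, so that only the magnetic condition requires new work. Fix a point $p\in M$. Since ${\bf 0}$ is a regular value of $F$, the vector fields $X_i=\nabla_\rho F_i/\Vert\nabla_\rho F_i\Vert^2$ are defined and pointwise independent near $p$; they commute by hypothesis and satisfy $X_iF_j=\langle\nabla_\rho F_i,\nabla_\rho F_j\rangle/\Vert\nabla_\rho F_i\Vert^2=\delta_{ij}$ by the orthogonality hypothesis. Hence there is a chart $(F_1,\dots,F_k,z_1,\dots,z_{n-k})$ adapted to the $X_i$ (with $n=\dim N$), in which $f$ is chosen to be a transverse coordinate, say $f=z_1$; this is exactly the regular function used in Corollary \ref{Generalization}, and its defining property is that $\nabla_\rho f$ is $\rho$-orthogonal to every $\nabla_\rho F_i$, equivalently $\langle\nabla_\rho f,\nabla_\rho F_i\rangle\equiv 0$. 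In particular $dU(\nabla_\rho f)=\sum_i(\partial_i g)(F)\,\langle\nabla_\rho F_i,\nabla_\rho f\rangle\equiv 0$, which re-establishes the potential hypothesis.

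For the magnetic condition I would use that $\mu=F^*\omega$ gives $d\mu=F^*(d\omega)$. Writing $d\omega=\sum_{i<j}B_{ij}\,dt_i\wedge dt_j$ on $\R^k$, the pullback is $d\mu=\sum_{i<j}(B_{ij}\circ F)\,dF_i\wedge dF_j$, and contraction with $\nabla_\rho f$ yields
$$\iota_{\nabla_\rho f}d\mu=\sum_{i<j}(B_{ij}\circ F)\big(\langle\nabla_\rho F_i,\nabla_\rho f\rangle\,dF_j-\langle\nabla_\rho F_j,\nabla_\rho f\rangle\,dF_i\big).$$
By the orthogonality $\langle\nabla_\rho f,\nabla_\rho F_i\rangle\equiv 0$ noted above, every bracket vanishes, so $\iota_{\nabla_\rho f}d\mu\equiv 0$ and a fortiori $\Vert\iota_{\nabla_\rho f}d\mu\Vert_{TN^{*}}=O(U^{1/2})$. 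Both hypotheses of the Main Theorem therefore hold on a neighborhood of $p$, and the Main Theorem gives the Lyapunov instability of $(p,{\bf 0})$. Since $p\in M$ was arbitrary, every point of $M\times\{{\bf 0}\}$ is unstable.

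For the ``moreover'' statement, observe that on $M$ one has $F\equiv{\bf 0}$, so for $q\in M$ the magnetic field is $d\mu_q=(dF_q)^*(d\omega_{\bf 0})$. Because ${\bf 0}$ is a regular value, $dF_q\colon T_qN\to\R^k$ is surjective, hence the induced map $(dF_q)^*$ on $2$-forms is injective; therefore $d\omega_{\bf 0}\neq 0$ forces $d\mu_q\neq 0$ for every $q\in M$. Finally, when $k>1$ the space $\wedge^2(\R^k)^*$ is nontrivial, so choosing for instance $\omega=t_1\,dt_2$ (whence $d\omega=dt_1\wedge dt_2$) produces a one-form of the form \eqref{form_example} whose magnetic field $d\mu$ is non-null at every point of $M$.

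The only genuinely new point beyond Corollary \ref{Generalization} is the identity $\iota_{\nabla_\rho f}d\mu\equiv 0$, and the obstacle is entirely in recognizing that it is forced by the interaction of two facts: that $\mu$ is pulled back by $F$, so that $d\mu$ lies in the ideal generated by $dF_1,\dots,dF_k$, and that the function $f$ supplied by the commuting–orthogonal structure has gradient orthogonal to all the $\nabla_\rho F_i$. Once the adapted chart and the orthogonality of $\nabla_\rho f$ are in hand (these, together with the $C^3$ regularity of $f$, are inherited verbatim from the proof of Corollary \ref{Generalization}), the magnetic computation and the surjectivity argument for the magnetic field are routine linear algebra.
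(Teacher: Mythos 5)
Your proposal is correct and follows essentially the same route as the paper: it reuses the function $f$ and the adapted chart from Corollary \ref{Generalization}, reduces the magnetic hypothesis of the Main Theorem to the identity $\iota_{\nabla_\rho f}\,d\mu\equiv 0$ (which holds because $d\mu=F^*(d\omega)$ lies in the ideal generated by $dF_1,\dots,dF_k$ while $\nabla_\rho f$ annihilates every $dF_i$), and proves the ``moreover'' part from the linear independence of the $d_qF_i$ at points of $M$. The only cosmetic difference is that you carry out the contraction invariantly on $N$, whereas the paper pulls everything back through $\Psi$ and uses the block structure of $\Psi^*(\rho)$; the two computations are the same argument in different coordinates.
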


In \cite{Hagedorn} and its generalizations \cite{BK}, \cite{Fu}, \cite{Ref6}, \cite{So}, the condition required on the magnetic term for the instability to hold at a point necessarily implies the vanishing of the magnetic field at the point. The closest statement to that condition but in our formalism would be
\begin{equation}\label{condition}
\Vert\, d\mu\,\Vert_{\Lambda^{2}\,TN^{*}}= O(U^{1/2}).
\end{equation}

However, it is clear that the second equation in the hypothesis of Theorem \ref{main} generalizes the previous condition. Moreover, the magnetic field is not required to vanish at the point in question as in the previous condition. On the contrary, the magnetic field could be distinct from zero verifying the characteristic equation on the spatial projection $M$ of the set of equilibrium points $M\times \{\bf 0\}$:
\begin{equation}\label{characteristic}
(\iota_{\nabla_\rho f}\,d\mu)_x=0,\qquad x\in M.
\end{equation}

This case, the prevalence of the magnetic field over the potential one, was also treated in \cite{BN}, \cite{BN2} and \cite{So2}. In these references, the conditions required for instability are non degeneracy conditions on the potential and the magnetic field as well. However, in contrast to these conditions, condition \eqref{characteristic} extends non trivially to the purely magnetic Lagrangian where $U\equiv 0$ and $M$ is the whole manifold $N$. Degenerate conditions were treated by Kozlov in \cite{Kozlov_dissipation} under the presence of dissipative forces, something we do not assume in this paper.


Concerning the existence of a solution of the linear PDE \eqref{PDE}, Lewy's example shows that it is not always possible in the case where the coefficients are smooth \cite{Lewy}, that is to say, the case where the potential and the metric are smooth which is our case. In the case where the potential and the metric are real analytic, one is tempted to think that the Cauchy-Kovalevskaya existence Theorem would provide a real analytic solution of \eqref{PDE}. However, the boundary conditions on the coefficients at the locus of zero potential points are singular hence the hypotheses of the theorem do not hold \cite{Cauchy_K}. Even if there were a solution of \eqref{PDE}, Theorem \ref{main1} requires it to be regular at the point $p$.

Finally, there are unstable equilibrium points where there is no neighborhood verifying the hypothesis of Theorem \ref{main1}. As an example consider the potential $U(x,y)=x^{2}\, y^{2}$ in Newtonian dynamics. The axes $x=0$ and $y=0$ are invariant zero potential spaces hence the origin is unstable for these axes provide escape routes to infinity with arbitrary small velocity. However, there is no regular function at the origin verifying the hypothesis of Theorem \ref{main1} for every vector field tangent to both axes must be null at the origin. In particular, Theorem \ref{main1} provides a sufficient but not a necessary conditions for Lyapunov instability.

\section{Proof of the Main Theorem}

Let $p$ be a point in $M$ and for every $\varepsilon>0$ consider the solution $x^{\varepsilon}$ of the Euler-Lagrange equations of the Lagrangian \eqref{Lagrangian} with initial conditions $x^{\varepsilon}(0)=p$ and $\dot{x}^{\varepsilon}(0)=\varepsilon\, \nabla_\rho f(p)$.

For every $\varepsilon>0$, define $x_\varepsilon$ such that $x_\varepsilon(\tau)= x^{\varepsilon}(\tau/\varepsilon)$ where $x^{\varepsilon}$ is defined. These are solutions of the Euler-Lagrange equations of the Lagrangian:
\begin{equation}\label{Lagrangian2}
L_\varepsilon({\rm x}, {\rm v})= Q_{{\rm x}}({\rm v})+\varepsilon^{-1}\,\mu_{{\rm x}}({\rm v})-\varepsilon^{-2}\, U({\rm x}),\qquad ({\rm x}, {\rm v})\in TN.
\end{equation}
Now, the initial conditions $x_{\varepsilon}(0)=p$ and $\dot{x}_{\varepsilon}(0)= \nabla_\rho f(p)$ are fixed but the motion equations become singular as $\varepsilon\to 0^{+}$. Denote by $I_\varepsilon$ the maximal interval containing zero where $x_{\varepsilon}$ is defined.

\begin{lemma}\label{Lema1}
For every $\varepsilon>0$, $\Vert\dot{x}_\varepsilon(\tau)\Vert\leq \Vert \nabla_\rho f(p) \Vert$ for every $\tau$ in $I_\varepsilon$ and
$${\rm Im}(x_\varepsilon)\subset [U\leq \varepsilon^{2}\,\Vert \nabla_\rho f(p) \Vert^{2}/2].$$
\end{lemma}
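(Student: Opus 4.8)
The plan is to exploit conservation of energy for the autonomous Lagrangian $L_\varepsilon$. First I would compute the associated energy (Jacobi) function via the Legendre transform,
$$E_\varepsilon({\rm x},{\rm v})= \frac{\partial L_\varepsilon}{\partial {\rm v}}\cdot{\rm v}-L_\varepsilon({\rm x},{\rm v}).$$
The decisive observation is that the magnetic one-form $\mu$ enters $L_\varepsilon$ linearly in the velocity, so its contribution $\varepsilon^{-1}\mu_{\rm x}({\rm v})$ to $\frac{\partial L_\varepsilon}{\partial {\rm v}}\cdot{\rm v}$ is again exactly $\varepsilon^{-1}\mu_{\rm x}({\rm v})$ and cancels against the same term appearing in $L_\varepsilon$. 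Since $Q_{\rm x}$ is quadratic, Euler's identity gives $\frac{\partial Q_{\rm x}}{\partial {\rm v}}\cdot{\rm v}=2\,Q_{\rm x}({\rm v})$, and one is left with the purely mechanical expression
$$E_\varepsilon({\rm x},{\rm v})= Q_{\rm x}({\rm v})+\varepsilon^{-2}\,U({\rm x}).$$

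Next I would invoke the fact that $L_\varepsilon$ has no explicit time dependence, so $E_\varepsilon$ is constant along the solution $x_\varepsilon$. Evaluating at $\tau=0$ and using the initial conditions $x_\varepsilon(0)=p$, $\dot{x}_\varepsilon(0)=\nabla_\rho f(p)$ together with $p\in M$ (hence $U(p)=0$), the conserved value is
$$E_\varepsilon= Q_p(\nabla_\rho f(p))+0=\tfrac{1}{2}\,\Vert\nabla_\rho f(p)\Vert^{2}.$$

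Finally, both summands of $E_\varepsilon$ are nonnegative — $Q$ because it is positive definite and $\varepsilon^{-2}U$ because $U\ge 0$ — so along $x_\varepsilon$ each is separately bounded by the total energy. From $Q_{x_\varepsilon(\tau)}(\dot{x}_\varepsilon(\tau))\le\tfrac12\Vert\nabla_\rho f(p)\Vert^{2}$ I read off $\Vert\dot{x}_\varepsilon(\tau)\Vert\le\Vert\nabla_\rho f(p)\Vert$, and from $\varepsilon^{-2}U(x_\varepsilon(\tau))\le\tfrac12\Vert\nabla_\rho f(p)\Vert^{2}$ I read off $U(x_\varepsilon(\tau))\le\varepsilon^{2}\Vert\nabla_\rho f(p)\Vert^{2}/2$, which is precisely the claimed inclusion for ${\rm Im}(x_\varepsilon)$. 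Both estimates hold on the entire maximal interval $I_\varepsilon$ because conservation of $E_\varepsilon$ is valid wherever the solution is defined.

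The only point requiring care — and the conceptual heart of an otherwise routine argument — is the cancellation of the gyroscopic term $\varepsilon^{-1}\mu$ in the energy. It is exactly this cancellation that removes the magnetic field from the energy bound, so that the velocity and confinement estimates are formally identical to those of the mechanical Lagrangian \eqref{Mech_Lagrangian} and hold uniformly in $\varepsilon$; the singular scaling $\varepsilon^{-2}U$ is then what forces the trajectory into the shrinking sublevel set $[U\le\varepsilon^{2}\Vert\nabla_\rho f(p)\Vert^{2}/2]$.
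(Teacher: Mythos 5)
Your proof is correct and follows essentially the same route as the paper: the paper likewise observes that the Hamiltonian $H_\varepsilon({\rm x},{\rm v})=\Vert {\rm v}\Vert^{2}/2+\varepsilon^{-2}U({\rm x})$ is conserved along $x_\varepsilon$, equals $\Vert\nabla_\rho f(p)\Vert^{2}/2$ at $\tau=0$, and bounds each nonnegative summand separately. The only difference is presentational: you derive the cancellation of the gyroscopic term via the Legendre transform, whereas the paper simply writes down the conserved Hamiltonian directly.
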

\begin{proof}
For every $\varepsilon>0$, the Hamiltonian
$$H_\varepsilon({\rm x},{\rm v})= \Vert {\rm v} \Vert^{2}/2+\varepsilon^{-2}U({\rm x})$$
is constant along the solution $x_\varepsilon$ hence
$$\Vert \dot{x}_\varepsilon(\tau) \Vert^{2}/2,\ \varepsilon^{-2}U(x_\varepsilon(\tau))\leq H_\varepsilon(x_\varepsilon(\tau), \dot{x}_\varepsilon(\tau))= H_\varepsilon(p, \nabla_\rho f(p))= \Vert \nabla_\rho f(p) \Vert^{2}/2$$
and the result follows.
\end{proof}

\begin{corollary}\label{Cor1}
Let $T>0$. For every $\varepsilon>0$ and every $\tau$ in $I_\varepsilon\cap [-T, T]$, $(x_{\varepsilon}(\tau), \dot{x}_\varepsilon(\tau))\in R_T$ where
$$R_T= \left\lbrace\,({\rm x}, {\rm v})\in TN\ |\ {\rm x}\in \overline{B_\rho(p, T\, \Vert \nabla_\rho f(p) \Vert)},\ {\rm v}\in  \overline{B_{{\rm x}}({\bf 0}, \Vert \nabla_\rho f(p) \Vert)}\,\right\rbrace.$$
Note that this region is a compact set not depending on $\varepsilon$.
\end{corollary}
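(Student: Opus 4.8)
The velocity part of the claim requires no work: it is exactly the first estimate of Lemma \ref{Lema1}, which asserts that the Riemannian norm of $\dot{x}_\varepsilon(\tau)$ at the point $x_\varepsilon(\tau)$ never exceeds $\Vert\nabla_\rho f(p)\Vert$, and this is precisely the fibre condition $\dot{x}_\varepsilon(\tau)\in\overline{B_{x_\varepsilon(\tau)}({\bf 0},\Vert\nabla_\rho f(p)\Vert)}$ appearing in the definition of $R_T$. So I would reduce the corollary to the position constraint and then extract the latter from the very same speed bound.

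For the position constraint the plan is to control the Riemannian distance $d_\rho(p,x_\varepsilon(\tau))$ by the length of the curve $x_\varepsilon$ run over the interval between $0$ and $\tau$. Since $x_\varepsilon(0)=p$, this distance is bounded by that length, namely $\int\Vert\dot{x}_\varepsilon(s)\Vert\,ds$ taken over the parameter interval between $0$ and $\tau$; feeding in the pointwise bound $\Vert\dot{x}_\varepsilon(s)\Vert\leq\Vert\nabla_\rho f(p)\Vert$ from Lemma \ref{Lema1} and using that this interval has length $|\tau|\leq T$ yields
$$d_\rho\bigl(p,x_\varepsilon(\tau)\bigr)\leq T\,\Vert\nabla_\rho f(p)\Vert.$$
Hence $x_\varepsilon(\tau)$ lies in the closed ball $\overline{B_\rho(p,T\,\Vert\nabla_\rho f(p)\Vert)}$, and together with the fibre bound this places $(x_\varepsilon(\tau),\dot{x}_\varepsilon(\tau))$ in $R_T$. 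Both radii being independent of $\varepsilon$, so is the region.

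The only genuinely delicate point is the asserted compactness of $R_T$. Over a compact base the fibrewise condition $\Vert v\Vert\leq\Vert\nabla_\rho f(p)\Vert$ cuts out a closed disc bundle, which is compact; so everything hinges on the closed metric ball $\overline{B_\rho(p,T\,\Vert\nabla_\rho f(p)\Vert)}$ itself being compact. I would secure this by invoking Hopf--Rinow under a completeness assumption on $\rho$, or, failing that, by shrinking $T$ (or the neighborhood $V$) so that the relevant ball sits inside a compact coordinate neighborhood of $p$—which is all the subsequent limiting arguments actually require.
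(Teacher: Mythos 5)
Your argument is correct and coincides with the paper's own proof: the fibre bound is Lemma \ref{Lema1} verbatim, and the base bound follows from $d(x_\varepsilon(\tau),p)\leq \left\vert\int_0^{\tau}\Vert\dot{x}_\varepsilon(s)\Vert\,ds\right\vert\leq \vert\tau\vert\,\Vert\nabla_\rho f(p)\Vert$ exactly as in the paper. Your added caution about the compactness of the closed metric ball is reasonable and goes slightly beyond the paper, which simply asserts compactness; it is indeed justified in context by later shrinking $T$ so that the ball sits inside the compact coordinate neighborhood $\Psi(B)$.
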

\begin{proof}
By Lemma \ref{Lema1}, $\Vert \dot{x}_\varepsilon(\tau)\Vert\leq \Vert \nabla_\rho f(p) \Vert$ and
$$d( x_\varepsilon(\tau),\,p) \leq \left\vert\int_0^{\tau}ds\ \Vert\dot{x}_\varepsilon(s)\Vert\ \right\vert\leq \vert \tau\vert\, \Vert \nabla_\rho f(p) \Vert\leq T\, \Vert \nabla_\rho f(p) \Vert,$$
the result follows.
\end{proof}

\begin{corollary}\label{Cor2}
For every $\varepsilon>0$, $x_\varepsilon$ is defined over the whole real line.
\end{corollary}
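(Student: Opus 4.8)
The plan is to deduce global existence from the a priori confinement already established, via the standard escape (or no-blow-up) lemma for flows. For each fixed $\varepsilon>0$ the Euler--Lagrange equations of $L_\varepsilon$ are the equations of an honest flow on $TN$: since $Q$ is positive definite, $L_\varepsilon$ is a regular Lagrangian, so its Euler--Lagrange equations can be solved for the acceleration and rewritten as $\dot y = X_\varepsilon(y)$ for a continuous (indeed $C^1$) vector field $X_\varepsilon$ on the manifold $TN$. I would invoke the escape lemma for such equations: if the maximal interval of definition $I_\varepsilon=(a,b)$ of an integral curve has $b<+\infty$ (resp. $a>-\infty$), then the curve must leave every compact subset of $TN$ as $\tau\to b^{-}$ (resp. $\tau\to a^{+}$). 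The argument then simply contradicts this escape behavior using the $\varepsilon$-independent compact set $R_T$ supplied by Corollary \ref{Cor1}.

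Concretely, suppose for contradiction that $b=\sup I_\varepsilon<+\infty$. Choosing any $T>b$ gives $[0,b)\subset I_\varepsilon\cap[-T,T]$, so by Corollary \ref{Cor1} the phase point $(x_\varepsilon(\tau),\dot x_\varepsilon(\tau))$ remains in the compact set $R_T$ for every $\tau\in[0,b)$. This contradicts the escape lemma, which forces the integral curve to exit $R_T$ as $\tau\to b^{-}$. Hence $b=+\infty$. The same reasoning applied at the left endpoint --- choosing $T>|a|$ and noting that $(a,0]\subset I_\varepsilon\cap[-T,T]$ --- yields $a=-\infty$. Therefore $I_\varepsilon=\R$, which is the assertion.

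The substantive inputs are exactly the two facts already in place, so there is little genuine obstacle here: Corollary \ref{Cor1} provides the compact set $R_T$ trapping the entire phase trajectory on $I_\varepsilon\cap[-T,T]$, and it is the energy bound of Lemma \ref{Lema1} that made this set compact (bounding both position, through the Lipschitz estimate on $x_\varepsilon$, and velocity). The only point meriting care is the appeal to the escape lemma itself: one must note that for fixed $\varepsilon>0$ the dynamics is genuinely the flow of a vector field on $TN$, so that a finite maximal time can arise only through the trajectory leaving every compact set; the divergence of the coefficients $\varepsilon^{-1}$ and $\varepsilon^{-2}$ in \eqref{Lagrangian2} is irrelevant since $\varepsilon$ is held fixed and positive throughout. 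Granting this, compactness of $R_T$ excludes blow-up and global existence is immediate.
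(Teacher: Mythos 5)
Your proof is correct and follows essentially the same route as the paper: both derive a contradiction from the fact that, by Corollary \ref{Cor1}, a trajectory with finite maximal time would remain in the compact set $R_T$, violating the escape lemma for the flow of the (regular, $\varepsilon$-fixed) Euler--Lagrange vector field on $TN$. Your version merely spells out the appeal to the escape lemma that the paper leaves implicit.
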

\begin{proof}
Consider the maximal interval $I_\varepsilon=(\omega_-, \omega_+)$ and suppose that $\omega_+$ is finite. Then, $(x_\varepsilon, \dot{x}_\varepsilon)|_{[0,\omega_+)}$ is contained in the compact set $R_{\omega_+}$ which is absurd hence $\omega_+=+\infty$. Analogously, $\omega_-=-\infty$.
\end{proof}

\begin{corollary}\label{Cor3}
Let $T>0$. There is a continuous curve $x:[-T,T]\rightarrow M$ with $x(0)=p$ and a sequence $(\varepsilon_j)$ such that $\varepsilon_j>0$, $\varepsilon_j\to 0^{+}$ and $x_{\varepsilon_j}\rightarrow x$ uniformly on $[-T,T]$.
\end{corollary}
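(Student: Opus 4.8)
The plan is to obtain the limit curve by a compactness argument of Arzelà--Ascoli type, working in the metric space $(N,d)$ determined by the Riemannian distance $d$ associated with $\rho$. First I would record that the restrictions $x_\varepsilon|_{[-T,T]}$ form a uniformly Lipschitz family: by Lemma \ref{Lema1} one has $\Vert\dot{x}_\varepsilon(\tau)\Vert\leq \Vert\nabla_\rho f(p)\Vert$ for every $\tau$, so for all $\tau_1,\tau_2\in[-T,T]$,
$$d(x_\varepsilon(\tau_1),x_\varepsilon(\tau_2))\leq |\tau_1-\tau_2|\,\Vert\nabla_\rho f(p)\Vert,$$
with a Lipschitz constant independent of $\varepsilon$. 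In particular the family is equicontinuous. Moreover, by Corollary \ref{Cor1} every value $x_\varepsilon(\tau)$ lies in the fixed compact ball $\overline{B_\rho(p,T\,\Vert\nabla_\rho f(p)\Vert)}$, so the family is pointwise relatively compact.

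These two properties are precisely the hypotheses of the Arzelà--Ascoli theorem for continuous maps from the compact interval $[-T,T]$ into the metric space $N$. Hence there is a sequence $\varepsilon_j\to 0^{+}$ such that $x_{\varepsilon_j}$ converges uniformly on $[-T,T]$ to a curve $x\colon[-T,T]\to N$; being a uniform limit of continuous curves, $x$ is continuous. Since $x_{\varepsilon_j}(0)=p$ for every $j$, passing to the limit gives $x(0)=p$.

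It then remains to verify that $x$ takes values in $M=U^{-1}(0)$, and this is the step where the estimate of Lemma \ref{Lema1} does the real work. Fixing $\tau\in[-T,T]$, the second assertion of that lemma yields
$$0\leq U(x_{\varepsilon_j}(\tau))\leq \varepsilon_j^{2}\,\Vert\nabla_\rho f(p)\Vert^{2}/2,$$
whose right-hand side tends to zero as $j\to\infty$ thanks to the factor $\varepsilon_j^{2}$. Since $x_{\varepsilon_j}(\tau)\to x(\tau)$ and $U$ is continuous, we get $U(x(\tau))=\lim_j U(x_{\varepsilon_j}(\tau))=0$, that is, $x(\tau)\in M$; as $\tau$ is arbitrary, $x([-T,T])\subset M$.

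The argument is essentially routine, and I do not anticipate a serious obstacle. The only points needing a little care are the invocation of Arzelà--Ascoli in the manifold setting, for which $d$ supplies the metric-space structure and Corollary \ref{Cor1} supplies the pointwise precompactness of the target, and the final verification that the limit lands in $M$ rather than merely in $N$, which hinges on the quadratic $\varepsilon^{2}$ factor in the energy bound of Lemma \ref{Lema1}.
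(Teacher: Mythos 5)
Your proposal is correct and follows essentially the same route as the paper: equicontinuity from the uniform velocity bound of Lemma \ref{Lema1}, pointwise precompactness from Corollary \ref{Cor1}, Arzel\`a--Ascoli to extract a uniformly convergent subsequence, and the $\varepsilon^{2}$ energy bound to force the limit curve into $M$. The only difference is cosmetic: you make explicit the pointwise relative compactness hypothesis of Arzel\`a--Ascoli in the metric-space setting, which the paper leaves implicit.
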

\begin{proof}
Consider the family $\mathcal F$ of functions $x_{\varepsilon}$ defined in $[-T, T]$  that are solutions of the Euler-Lagrange equations of  \eqref{Lagrangian2} such that 
$x_{\varepsilon}(0)=p$ and $\dot x_{\varepsilon}(0)= \nabla_\rho f(p)$.  By Lemma \ref{Lema1} the family $\mathcal F$ is equicontinuous.  
Consider a sequence    $(x_{\varepsilon_j})$ such that $\varepsilon_j\to 0^{+}$.       By  Arzel\`a--Ascoli Theorem there is a subsequence, that we still call by   $(x_{\varepsilon_j})$, 
converging uniformly to a continuous curve $x$.  Because $x_\varepsilon(0)=p$ for every $\varepsilon>0$ we conclude that $x(0)=p$. 
Let  $t\in [-T, T]$. By Lemma \ref{Lema1} 

$$0\leq U(x_{\varepsilon_j}(t))\leq \varepsilon_j^{2}\,\Vert \nabla_\rho f(p) \Vert^{2}/2.$$

Therefore $U(x(t))=0$ for every $t\in [-T, T]$, i.e.,  

$${\rm Im}(x)\subset   [U=0]=M$$
and this finishes the proof.
\end{proof}

Now we construct suitable coordinates. Recall that $V$ is a neighborhood of $p$ in $N$ where the regular $C^{3}$ function $f$ is defined. Consider the flow $\phi$ in $V$
\begin{equation}\label{Cauchy3}
\partial_t \phi= \frac{\nabla_\rho f}{\Vert \nabla_\rho f \Vert^{2}}(\phi),\ \phi(0,{\rm x})= {\rm x},\ {\rm x}\in V.
\end{equation}

Without loss of generality, by adding a constant if necessary we may suppose that $f(p)=0$. Consider a local parameterization $(W, \psi)$ of the submanifold $f^{-1}(0)$ in $V$ centered at $p$. Let $B$ be the open set of those $(z, y)$ such that $(z,\psi(y))$ belongs to the domain of $\phi$. 
Define $\Psi:B\rightarrow N$ by 
\begin{equation}\label{Def_Psi}
\Psi(z,y)= \phi(z,\psi(y))
\end{equation}

\begin{lemma}\label{Lema2}
\begin{enumerate}
\item $\Psi(0,y)= \psi(y)$ for every $y$ in $W$.
\item $f(\Psi(z,y))= z$ for every $(z,y)$ in $B$.
\item $(B, \Psi)$ is a $C^{2}$ local parameterization of $N$ centered at $p$.
\end{enumerate}
\end{lemma}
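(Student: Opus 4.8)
The plan is to recognize $\phi$ as the flow that straightens $f$ into a coordinate, the decisive point being that $f$ grows at unit speed along the orbits of the field $X=\nabla_\rho f/\Vert\nabla_\rho f\Vert^{2}$. Differentiating along an orbit and using the definition \eqref{Cauchy3},
$$\frac{d}{dt}\,f(\phi(t,{\rm x}))= df\bigl(\partial_t\phi\bigr)= df\!\left(\frac{\nabla_\rho f}{\Vert\nabla_\rho f\Vert^{2}}(\phi)\right)= \frac{\langle\nabla_\rho f,\nabla_\rho f\rangle_\rho}{\Vert\nabla_\rho f\Vert^{2}}(\phi)= 1,$$
so that $f(\phi(t,{\rm x}))=f({\rm x})+t$ wherever the flow is defined; here regularity of $f$ on $V$ guarantees $X$ is well defined and $\Vert\nabla_\rho f\Vert$ does not vanish. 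Statement (1) is then immediate from the initial condition $\phi(0,{\rm x})={\rm x}$, giving $\Psi(0,y)=\phi(0,\psi(y))=\psi(y)$.

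For statement (2) I would combine this unit-speed identity with the defining property of $\psi$. Since $(W,\psi)$ parameterizes $f^{-1}(0)$ we have $f(\psi(y))=0$, whence $f(\Psi(z,y))=f(\phi(z,\psi(y)))=f(\psi(y))+z=z$ for every $(z,y)$ in $B$.

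For statement (3) I would first settle regularity and then invoke the inverse function theorem. Because $f$ is $C^{3}$, the field $X$ is $C^{2}$, so its flow $\phi$ is $C^{2}$ jointly in $(t,{\rm x})$; as $\psi$ may be taken $C^{3}$ (a regular level set of a $C^{3}$ function is a $C^{3}$ submanifold), the composition $\Psi$ is $C^{2}$, and $\Psi(0,0)=\psi(0)=p$, so $\Psi$ is centered at $p$. It remains to verify that $d\Psi_{(0,0)}$ is an isomorphism. Setting $n=\dim N$ and writing $y=(y_1,\ldots,y_{n-1})$, the initial condition $\phi(0,\cdot)=\mathrm{id}$ gives $\partial_{y_i}\Psi(0,0)=\partial_{y_i}\psi(0)$, which form a basis of $T_p f^{-1}(0)=\ker df_p$; moreover $\partial_z\Psi(0,0)=X(p)=\nabla_\rho f(p)/\Vert\nabla_\rho f(p)\Vert^{2}$ satisfies $df_p(\partial_z\Psi(0,0))=1\neq 0$, hence is transverse to $\ker df_p$. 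These $n$ vectors therefore span $T_pN$, $d\Psi_{(0,0)}$ is invertible, and $\Psi$ is a $C^{2}$ local parameterization centered at $p$.

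The substantive content is the unit-speed identity, which does essentially all the work for (1) and (2), together with the transversality of $\partial_z\Psi$ to the level set in (3). The only step requiring genuine care is the regularity bookkeeping: one must check that differentiating the flow once in $t$ and once in ${\rm x}$ remains within the $C^{2}$ class afforded by $f\in C^{3}$, which is exactly the regularity asserted in the lemma.
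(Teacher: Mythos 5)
Your parts (1) and (2) coincide with the paper's argument: the unit-speed identity $f(\phi(t,\mathrm{x}))=f(\mathrm{x})+t$ is exactly the computation used there. The gap is in part (3). You verify that $d\Psi$ is invertible only at the single point $(0,0)$ and then invoke the inverse function theorem, which yields a diffeomorphism on \emph{some} unspecified neighborhood of $(0,0)$ --- not the statement that the pair $(B,\Psi)$, with the given domain $B$, is a parameterization. The paper needs the latter: the coordinates $(z_\varepsilon,y_\varepsilon)$ are later defined via $\Psi^{-1}$ on the full image $\Psi(B)$, so one must know that $\Psi$ is an injective immersion on all of $B$. Two things are therefore missing from your argument. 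First, invertibility of $d_{(z,y)}\Psi$ at every $(z,y)\in B$; the paper gets this from the flow identity $\Psi(z+s,y)=\phi_z(\Psi(s,y))$, which gives $d_{(z,y)}\Psi=d_{\psi(y)}\phi_z\circ d_{(0,y)}\Psi$, and the fact that $d\phi_z$ is an isomorphism (your transversality computation handles the slice $\{0\}\times W$, essentially as the paper does via orthogonality of $\nabla_\rho f$ to the level set, but says nothing off that slice). Second, global injectivity of $\Psi$ on $B$, which the paper deduces slickly from part (2): if $\Psi(z,y)=\Psi(z',y')$ then $z=f(\Psi(z,y))=f(\Psi(z',y'))=z'$, and applying $\phi_{-z}$ reduces to injectivity of $\psi$. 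Your version could be repaired by shrinking $B$ (and later $T$), so the overall scheme of the paper would survive, but as a proof of the lemma as stated it is incomplete.
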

\begin{proof}
\begin{enumerate}
\item This is immediate from the definition \eqref{Cauchy3}.
\item By definition, $\partial_t(f\circ\phi)= \langle\nabla_\rho f(\phi), \partial_t\phi\rangle= 1$ hence
$$f\circ\phi(t, x)= t+ f(\phi(0,x))= t+f(x),$$
$$f(\Psi(z,y))= z+f(\psi(y))=z$$
for $\psi(y)$ is in $f^{-1}(0)$.

\item Because $\nabla_\rho f(\psi(y))\neq {\bf 0}$ and $\nabla_\rho f(\psi(y))\perp T_{\psi(y)}\psi(W)$, we conclude that $d_{(0,y)}\Psi$ is an isomorphism for $d_{y}\psi$ is so.

Consider a point $(z,y)$ in $B$. Let $s\mapsto (\alpha(s),\beta(s))\in B$ be a curve with initial conditions $\alpha(0)=0$ and $\beta(0)=y$ such that $(z+\alpha(s),\beta(s))\in B$. 
By \eqref{Def_Psi} we have

$$\Psi(z+\alpha(s), \beta(s))=\phi_z(\Psi(\alpha(s), \beta(s))). $$ 

where we have defined $\phi_z(a)= \phi(z,a)$. Hence 
$$d_{(z,y)}\Psi(\dot\alpha(0),\dot\beta(0))= d_{\psi(y)}\phi_z( d_{(0,y)}\Psi (\dot\alpha(0),\dot\beta(0))), $$
 i.e.

$$d_{(z,y)}\Psi = d_{\psi(y)}\phi_z\circ d_{(0,y)}\Psi.$$
Then, $d_{(z,y)}\Psi$ is also an isomorphism for every $d_{\psi(y)}\phi_z$ is so by the Liouville formula. By the inverse function Theorem, $\Psi$ is a local diffeomorphism. To show that $\Psi$ is an embedding, it rest to show that it is injective.

Suppose that $\Psi(z,y)=\Psi(z', y')$. Then,
$$z= f(\Psi(z,y))= f(\Psi(z',y'))= z',$$
$$\psi(y)= \phi_{-z}(\Psi(z,y))= \phi_{-z}(\Psi(z,y'))= \psi(y')$$
so $(z,y)=(z', y')$ for $\psi$ is injective.
\end{enumerate}
\end{proof}

Let $T>0$ be small enough such that the compact set $\overline{B_{\rho}(p, T\,\Vert \nabla_\rho f(p) \Vert)}$ is contained in $\Psi(B)$. From now on, all the curves will be defined on $[-T,T]$.

Denote by $z_\varepsilon$ and $y_\varepsilon$ the coordinates of $x_\varepsilon$ with respect to the parameterization $(B, \Psi)$:
$$\Psi(z_\varepsilon(\tau), y_\varepsilon(\tau))= x_\varepsilon(\tau),\qquad \tau\in [-T,T]$$
and an analogous definition for the limit curve in Corollary \ref{Cor3}.

In what follows, we adopt the Einstein's summation convention on repeated indices. 

For every $\xi=(z,y)$ in $B$ we define the zero superindex of $\xi$ as the $z$ coordinate: $\xi^{0}=z$. This way, we denote by greek letters indices ranging from $0$ to $n-1$. We denote by latin letters indices ranging from $1$ to $n-1$, thus $\xi^b=y^b$ for $i=1,\dots,n-1$.

Because of the construction of the parameterization, the pullback of the metric $\rho$ by the map $\Psi$ has the form
$$\Psi^{*}(\rho)= \gi_{00}\, dz\otimes dz + \gi_{ab}\,dy^{a}\otimes dy^{b}.$$
Note that there are no mixed indices terms or equivalently $\gi_{0a}=0$. In particular, thinking of the metric as a matrix $(\gi_{\alpha\beta})$, its inverse $(\gi^{\alpha\beta})$ verifies
$$\gi^{00}= \gi_{00}^{-1},\quad (\gi^{ab})=(\gi_{ab})^{-1},\quad \gi^{0a}=0.$$

The Christoffel symbols (of the second kind) are defined as follows:
$$\Gamma^{\alpha}_{\mu\nu}=\frac{\gi^{\alpha\beta}}{2}\left(\gi_{\beta\mu,\nu}+\gi_{\beta\nu,\mu}- \gi_{\mu\nu,\beta}\right)$$
where we have denoted by a comma the respective partial derivative. In contrast with the metric, now these symbols are not the coefficients of a tensor. However, with respect to the parameterization we are using, they are the coefficients of a connection, specifically, the Levi-Civita connection.

The pullback of the magnetic form $\mu$ by the parameterization considered before has the form
$$\Psi^{*}(\mu)= \mathrm{A}_\alpha\, d\xi^{\alpha}$$
and the magnetic field
$$\Psi^{*}(d\mu)= \frac{1}{2}\Fi_{\alpha\beta}\ d\xi^{\alpha}\wedge d\xi^{\beta}= \sum_{\alpha<\beta}\Fi_{\alpha\beta}\ d\xi^{\alpha}\wedge d\xi^{\beta}$$
where Einstein's convention were not used in the last equality. In particular,
$$\Fi_{\alpha\beta}= \partial_\alpha \mathrm{A}_\beta - \partial_\beta \mathrm{A}_\alpha.$$

\begin{lemma}\label{Lema3}
\begin{enumerate}
\item The functions $\dot{z}_\varepsilon$ and $\dot{y}_\varepsilon^{k}$ are uniformly bounded on $[-T,T]$ by a constant not depending on $\varepsilon>0$.
\item There is a constant $C_1>0$ not depending on $\varepsilon>0$ such that $\vert\,\Fi_{0a}(z_\varepsilon, y_\varepsilon)\,\vert\leq C_1\,\varepsilon$ on $[-T,T]$ for every $\varepsilon>0$.
\item Define $u=U\circ \Psi$. There is a constant $C_2>0$ not depending on $\varepsilon>0$ such that $\vert\, \partial_0 u(z_\varepsilon, y_\varepsilon)\,\vert\leq C_2\,\varepsilon^{2}$ on $[-T,T]$ for every $\varepsilon>0$.
\end{enumerate}
\end{lemma}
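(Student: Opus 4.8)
The plan is to establish the three estimates in order, each reducing to a coordinate computation combined with the bounds of Lemma \ref{Lema1} and the compactness of $R_T$ from Corollary \ref{Cor1}. The unifying observation is that the gradient field has a very simple form in the parameterization $(B,\Psi)$. Differentiating $\Psi(z,y)=\phi(z,\psi(y))$ in $z$ and using \eqref{Cauchy3} gives $\partial_0\Psi=(\nabla_\rho f/\Vert\nabla_\rho f\Vert^{2})\circ\Psi$. Equivalently, since $f\circ\Psi=z$ by Lemma \ref{Lema2}(2) and the metric is block diagonal ($\gi_{0a}=0$), the contravariant components of $\nabla_\rho f$ are $(\nabla_\rho f)^{\alpha}=\gi^{\alpha 0}$, so in these coordinates $\nabla_\rho f=\gi^{00}\,\partial_0$ and $\Vert\nabla_\rho f\Vert^{2}=\gi^{00}$. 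This identity is the bridge between the coordinate-free hypotheses of the theorem and the coordinate quantities $\partial_0 u$ and $\Fi_{0a}$.

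For part (1) I would write $\dot{x}_\varepsilon=\dot{z}_\varepsilon\,\partial_0\Psi+\dot{y}_\varepsilon^{\,k}\,\partial_k\Psi$, so that $\Vert\dot{x}_\varepsilon\Vert^{2}=\gi_{00}\,\dot{z}_\varepsilon^{2}+\gi_{ab}\,\dot{y}_\varepsilon^{a}\dot{y}_\varepsilon^{b}$. By Corollary \ref{Cor1} the pairs $(x_\varepsilon,\dot{x}_\varepsilon)$ remain in the fixed compact set $R_T$, on which the positive definite metric is uniformly bounded below; together with the velocity bound $\Vert\dot{x}_\varepsilon\Vert\leq\Vert\nabla_\rho f(p)\Vert$ of Lemma \ref{Lema1}, this forces $\dot{z}_\varepsilon$ and each $\dot{y}_\varepsilon^{\,k}$ to be bounded by a constant independent of $\varepsilon$. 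For part (3), the gradient identity gives $\partial_0 u=dU(\partial_0\Psi)=\Vert\nabla_\rho f\Vert^{-2}\,dU(\nabla_\rho f)$. The first hypothesis $dU(\nabla_\rho f)=O(U)$ and the lower bound for $\Vert\nabla_\rho f\Vert$ on $R_T$ (where regularity of $f$ is essential) yield $|\partial_0 u|\leq C\,u$; evaluating at $(z_\varepsilon,y_\varepsilon)$ and inserting $u(z_\varepsilon,y_\varepsilon)=U(x_\varepsilon)\leq\varepsilon^{2}\Vert\nabla_\rho f(p)\Vert^{2}/2$ from Lemma \ref{Lema1} produces the $O(\varepsilon^{2})$ estimate.

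For part (2) I would contract $\Psi^{*}(d\mu)=\tfrac12\Fi_{\alpha\beta}\,d\xi^{\alpha}\wedge d\xi^{\beta}$ with $\nabla_\rho f=\gi^{00}\partial_0$. Because $\iota_{\partial_0}(d\xi^{\alpha}\wedge d\xi^{\beta})$ retains only the terms with $\alpha=0$ or $\beta=0$ and $\Fi_{00}=0$, one gets $\Psi^{*}(\iota_{\nabla_\rho f}\,d\mu)=\gi^{00}\Fi_{0a}\,dy^{a}$, whose squared $TN^{*}$-norm equals $(\gi^{00})^{2}\gi^{ab}\Fi_{0a}\Fi_{0b}$. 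On $R_T$ the factor $(\gi^{00})^{2}$ is bounded below and $(\gi^{ab})$ is uniformly positive definite, so this norm dominates $c\sum_a\Fi_{0a}^{2}$ for some $c>0$; the second hypothesis $\Vert\iota_{\nabla_\rho f}\,d\mu\Vert_{TN^{*}}=O(U^{1/2})$ then gives $|\Fi_{0a}|=O(U^{1/2})$, and Lemma \ref{Lema1} converts $U^{1/2}(x_\varepsilon)\leq\varepsilon\Vert\nabla_\rho f(p)\Vert/\sqrt{2}$ into the claimed $O(\varepsilon)$ bound.

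The main obstacle is the bookkeeping in the first paragraph: correctly identifying the coordinate expression $\nabla_\rho f=\gi^{00}\partial_0$ and translating the two intrinsic hypotheses into statements about $\partial_0 u$ and $\Fi_{0a}$. Once that dictionary is in place, all three estimates reduce to elementary compactness arguments followed by substitution of the Lemma \ref{Lema1} bounds. The regularity of $f$ enters precisely through the strict positivity of $\Vert\nabla_\rho f\Vert^{2}=\gi^{00}$ on the compact set $R_T$, which is what keeps every constant finite and independent of $\varepsilon$.
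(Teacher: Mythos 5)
Your proposal is correct and follows essentially the same route as the paper: the identity $\partial_0\Psi=\nabla_\rho f/\Vert\nabla_\rho f\Vert^{2}$ (equivalently $\nabla_\rho f=\gi^{00}\partial_0$, $\Vert\nabla_\rho f\Vert^{2}=\gi^{00}$) is exactly the dictionary the paper uses, and each of the three bounds is obtained, as in the paper, by combining the uniform positivity of the metric and of $\Vert\nabla_\rho f\Vert$ on the fixed compact set with the energy estimates $\Vert\dot x_\varepsilon\Vert\leq\Vert\nabla_\rho f(p)\Vert$ and $U(x_\varepsilon)\leq\varepsilon^{2}\Vert\nabla_\rho f(p)\Vert^{2}/2$ from Lemma \ref{Lema1}. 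The only cosmetic difference is that you contract $d\mu$ with $\nabla_\rho f=\gi^{00}\partial_0$ while the paper contracts with $\partial_0$ and carries the factor $\Vert\nabla_\rho f\Vert^{-4}$ separately; the resulting estimates are identical.
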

\begin{proof}
For every ${\rm x}$ in $B$ define the quadratic form $Q_{{\rm x}}$ such that
$$Q_{{\rm x}}({\rm v})= \Vert d_{{\rm x}} \Psi({\rm v})\Vert^{2}.$$
It is positive definite for every ${\rm x}$ and defines a strictly positive continuous function on the unit tangent sphere bundle $\pi: T^{1}B\to B$ with respect to the euclidean metric on $T\,B$. In particular, it attains a minimum and maximum value ${\rm m,\ M}>0$ respectively on the compact set $(\Psi\circ\pi)^{-1}(\overline{B_\rho(p,T\,\Vert \nabla_\rho f(p) \Vert )})$. Denote by $m_\nabla>0$ the minimum of $\Vert \nabla_\rho f \Vert$ on $\overline{B_\rho(p,T\,\Vert \nabla_\rho f(p) \Vert )}$.
\begin{enumerate}
\item For every $\varepsilon>0$ and $\tau$ in $[-T,T]$ we have
$${\rm m}\,\Vert(\dot{z}_{\varepsilon}(\tau), \dot{y}_{\varepsilon}(\tau))\Vert_E^{2}\leq Q_{(z_{\varepsilon}(\tau), y_{\varepsilon}(\tau))}(\dot{z}_{\varepsilon}(\tau), \dot{y}_{\varepsilon}(\tau))= \Vert \dot{x}_\varepsilon(\tau)\Vert^{2}\leq \Vert \nabla_\rho f(p) \Vert^{2}$$
where $\Vert\cdot \Vert_E$ is the euclidean norm on $T\,B$ hence
$$\vert\dot{z}_{\varepsilon}(\tau)\vert,\ \vert\dot{y}_{\varepsilon}^{k}(\tau))\vert\leq \frac{\Vert \nabla_\rho f(p) \Vert}{{\rm m}^{1/2}}.$$

\item

By our hypothesis in Theorem \ref{main}, there is a constant $K_1>0$ not depending on $\varepsilon$ such that 

$$ \Vert \iota_{\nabla_\rho f} d\mu({\rm x})\Vert^{2}_{T^{*}\Psi(B)}\leq K_1\, U({\rm x})$$
for every ${\rm x}$ in $\Psi(B)$ where the norm on the cotangent space $T^{*}\Psi(B)$ is the canonical one induced from the norm on the tangent space through the Riesz representation Theorem. For every $\varepsilon>0$ and $\tau$ in $[-T,T]$ we have
$${\rm M}^{-1}\,\sum_a (\Fi_{0a})^{2}\leq \gi^{ab}\,\Fi_{0a}\,\Fi_{0b} =
\Vert \Fi_{0a}\, dy^{a}\Vert^{2}_{T^{*}B}= \Vert \iota_0 \Psi^{*}d\mu\Vert^{2}_{T^{*}B}$$
$$= \Vert \nabla_\rho f \Vert^{-4}\,\Vert \iota_{\nabla_\rho f} d\mu\Vert^{2}_{T^{*}\Psi(B)}
\leq m_\nabla^{-4}\,\Vert \iota_{\nabla_\rho f} d\mu\Vert^{2}$$
$$\leq m_\nabla^{-4}K_1\, U(x_\varepsilon)\leq m_\nabla^{-4}K_1\frac{\Vert\nabla_\rho f(p)\Vert^2}{2}\, \varepsilon^{2}.$$
where the norm $\Vert \cdot\Vert_{T^{*}B}$ on the cotangent space $T^{*}B$ is the pushout of the norm $\Vert \cdot\Vert_{T^{*}\Psi(B)}$ by the codiferential $\delta\Psi$. Recall that this pushout is possible for $\Psi$ is a diffeomorphism. Set 
$$ C_1=  \left(\frac{ m_\nabla^{-4}K_1}{2{\rm M}}\right)^{1/2}\Vert\nabla_\rho f(p)\Vert,$$
hence
$$\vert\Fi_{0a}\vert\leq C_1\, \varepsilon.$$

\item

By our hypothesis in Theorem \ref{main}, there is a constant $K_2>0$ not depending on $\varepsilon$ such that 
$$\vert dU( \nabla_\rho f({\rm x}))\vert\leq K_2\, U({\rm x})$$
for every ${\rm x}$ in $\Psi(B)$.  Set
$$C_2=m_\nabla^{-2}  K_2\frac{\Vert\nabla_\rho f(p)\Vert^2}{2}.$$
Hence
$$\vert\partial_0 u\vert= \vert dU(\partial_0 \Psi)\vert= \Vert \nabla_\rho f \Vert^{-2}\, \vert dU(\nabla_\rho f)\vert \leq m_\nabla^{-2}\,K_2\,U(x_\varepsilon)\leq \,C_2\,\varepsilon^{2}.$$

\end{enumerate}
The proof is complete.
\end{proof}


\begin{lemma}\label{Lema5}
The functions $z_{\varepsilon}$ and $y_{\varepsilon}^{k}$ satisfy the equation

\begin{equation}\label{Equation_y}
\ddot{z}_{\varepsilon}+  \Gamma^{0}_{0 0}\,\dot{z}^2_{\varepsilon}+ 2\,\Gamma^{0}_{0 b}\,\dot{z}_{\varepsilon}\,\dot{y}^{b}_{\varepsilon} +\Gamma^{0}_{a b}\,\dot{y}^{a}_{\varepsilon}\,\dot{y}^{b}_{\varepsilon}+
\varepsilon^{-1}\, \gi^{00}\,\Fi_{0 b}\,\dot{y}^{b}_{\varepsilon}+ \varepsilon^{-2}\,\gi^{00}\,\partial_{0} u=0\;\;\;\;\;\;\;\;\;\;
\end{equation}
where the coefficients are evaluated over the curve $(z_\varepsilon, y_\varepsilon)$.
\end{lemma}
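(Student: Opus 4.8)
The plan is to obtain \eqref{Equation_y} as the $\xi^{0}$-component of the Euler--Lagrange equations of the singular Lagrangian \eqref{Lagrangian2}, written in the coordinates $(B,\Psi)$ just constructed. First I would express the Lagrangian in these coordinates: using the pullbacks $\Psi^{*}\rho=\gi_{\alpha\beta}\,d\xi^{\alpha}\otimes d\xi^{\beta}$ (so that $Q_{{\rm x}}({\rm v})=\tfrac12\gi_{\alpha\beta}\dot\xi^{\alpha}\dot\xi^{\beta}$), $\Psi^{*}\mu=\mathrm{A}_{\alpha}\,d\xi^{\alpha}$ and $u=U\circ\Psi$, the curve $(z_\varepsilon,y_\varepsilon)$ is a critical point of
$$L_\varepsilon(\xi,\dot\xi)=\tfrac12\,\gi_{\alpha\beta}\,\dot\xi^{\alpha}\dot\xi^{\beta}+\varepsilon^{-1}\mathrm{A}_{\alpha}\dot\xi^{\alpha}-\varepsilon^{-2}u,$$
hence satisfies $\tfrac{d}{d\tau}\partial_{\dot\xi^{\gamma}}L_\varepsilon-\partial_{\xi^{\gamma}}L_\varepsilon=0$ for each $\gamma$.

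Then I would assemble the three contributions separately. The kinetic part produces $\gi_{\gamma\beta}\ddot\xi^{\beta}+\bigl(\gi_{\gamma\beta,\mu}-\tfrac12\gi_{\mu\beta,\gamma}\bigr)\dot\xi^{\mu}\dot\xi^{\beta}$; symmetrizing the first coefficient in $(\mu,\beta)$ (legitimate since $\dot\xi^{\mu}\dot\xi^{\beta}$ is symmetric) turns the parenthesis into $\tfrac12(\gi_{\gamma\beta,\mu}+\gi_{\gamma\mu,\beta}-\gi_{\mu\beta,\gamma})$, which is the Christoffel symbol of the first kind. The magnetic part contributes $\varepsilon^{-1}(\mathrm{A}_{\gamma,\mu}-\mathrm{A}_{\mu,\gamma})\dot\xi^{\mu}$, which by the definition $\Fi_{\alpha\beta}=\partial_\alpha\mathrm{A}_\beta-\partial_\beta\mathrm{A}_\alpha$ is the antisymmetric field tensor contracted with the velocity. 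The potential part is simply $\varepsilon^{-2}\partial_{\gamma}u$. Contracting the resulting equation with the inverse metric $\gi^{\delta\gamma}$ converts the first-kind combination into $\Gamma^{\delta}_{\mu\beta}$ and gives
$$\ddot\xi^{\delta}+\Gamma^{\delta}_{\mu\beta}\,\dot\xi^{\mu}\dot\xi^{\beta}+\varepsilon^{-1}\gi^{\delta\gamma}\Fi_{\mu\gamma}\dot\xi^{\mu}+\varepsilon^{-2}\gi^{\delta\gamma}\partial_{\gamma}u=0.$$

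Finally I would set $\delta=0$ and invoke the block-diagonal structure of the metric coming from Lemma \ref{Lema2}, namely $\gi^{0a}=0$. Expanding $\Gamma^{0}_{\mu\beta}\dot\xi^{\mu}\dot\xi^{\beta}$ over the index ranges $0$ and $1,\dots,n-1$ and using the symmetry $\Gamma^{0}_{0b}=\Gamma^{0}_{b0}$ splits it into $\Gamma^{0}_{00}\dot z^{2}+2\Gamma^{0}_{0b}\dot z\,\dot y^{b}+\Gamma^{0}_{ab}\dot y^{a}\dot y^{b}$, while $\gi^{0a}=0$ collapses the last two contractions to their $\gamma=0$ parts, yielding the magnetic term $\varepsilon^{-1}\gi^{00}\Fi_{0b}\dot y^{b}$ (the $\mu=0$ contribution dropping out because $\Fi_{00}=0$, the precise sign being a matter of the field-tensor convention) and the potential term $\varepsilon^{-2}\gi^{00}\partial_{0}u$, which is exactly \eqref{Equation_y}. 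The only genuine work is the bookkeeping: the symmetrization that manufactures the Christoffel symbols of the second kind and the antisymmetrization that produces $\Fi_{0b}$. The decisive simplification, and the reason the equation closes so cleanly in the $z$-variable, is the vanishing of the mixed coefficients $\gi^{0a}$, which is precisely why $\Psi$ was built from the $\nabla_\rho f$-flow so that $z=f$ and the level sets of $f$ foliate the domain orthogonally.
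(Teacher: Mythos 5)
Your proposal is correct and follows exactly the route the paper takes: write the Lagrangian \eqref{Lagrangian2} in the coordinates $(B,\Psi)$, invoke covariance of the Euler--Lagrange equations, and carry out the computation that the paper dismisses as ``straightforward,'' with the block structure $\gi^{0a}=0$ collapsing the $\delta=0$ component to \eqref{Equation_y}. The only point of divergence is the sign of the magnetic term (your derivation yields $\Fi_{b0}=-\Fi_{0b}$ where the statement writes $+\Fi_{0b}$), but as you note this is a convention matter and is immaterial since only the bound $\vert\Fi_{0b}\vert\leq C_1\varepsilon$ is used downstream in Corollary \ref{Cor4}.
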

\begin{proof}
With respect to the parameterization $(B, \Psi)$, the Lagrangian \eqref{Lagrangian2} reads as follows:
$$L_\varepsilon\circ d\Psi(z,y,v)= \frac{\gi_{\alpha\beta}(z,y)}{2}\,v^{\alpha}v^{\beta}+\varepsilon^{-1}\,A_\alpha(z,y)\, v^{\alpha} -\varepsilon^{-2}\,u(z,y).$$
The Euler-Lagrange equations are covariant with respect to coordinate changes hence the coordinate curves $z_{\varepsilon}$ and $y_{\varepsilon}$ verify them. A straightforward calculation of the equation gives the result.
\end{proof}

\begin{corollary}\label{Cor4}
There is a positive constant $C$ independent of $\varepsilon>0$ such that $\vert\ddot{z}_{\varepsilon}(\tau)\vert\leq C$ for every $\tau$ in $[-T,T]$.
\end{corollary}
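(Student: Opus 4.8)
The plan is to read off $\ddot z_\varepsilon$ algebraically from the Euler--Lagrange equation \eqref{Equation_y} of Lemma \ref{Lema5} and to bound each of the resulting five terms by a constant independent of $\varepsilon$. Solving that equation for the highest derivative gives
\begin{equation*}
\ddot z_\varepsilon = -\,\Gamma^{0}_{00}\,\dot z_\varepsilon^{2} - 2\,\Gamma^{0}_{0b}\,\dot z_\varepsilon\,\dot y^{b}_\varepsilon - \Gamma^{0}_{ab}\,\dot y^{a}_\varepsilon\,\dot y^{b}_\varepsilon - \varepsilon^{-1}\,\gi^{00}\,\Fi_{0b}\,\dot y^{b}_\varepsilon - \varepsilon^{-2}\,\gi^{00}\,\partial_{0} u,
\end{equation*}
where every coefficient is understood to be evaluated along the curve $(z_\varepsilon,y_\varepsilon)$. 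It therefore suffices to produce a uniform bound for each of these five summands.

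The first three summands are the geometric ones. Since the metric is $C^{2}$, the Christoffel symbols $\Gamma^{0}_{\mu\nu}$ and the component $\gi^{00}$ are continuous functions on $B$; by Corollary \ref{Cor1} the spatial projection of the orbit stays inside the fixed compact set $\overline{B_\rho(p,T\,\Vert\nabla_\rho f(p)\Vert)}\subset\Psi(B)$ for all $\varepsilon$, so these coefficients admit a common bound there. Combining this with the uniform velocity bounds $\vert\dot z_\varepsilon\vert,\,\vert\dot y^{k}_\varepsilon\vert\leq \Vert\nabla_\rho f(p)\Vert/\mathrm{m}^{1/2}$ from Lemma \ref{Lema3}(1) shows that the three Christoffel terms are bounded uniformly in $\varepsilon>0$ and $\tau\in[-T,T]$.

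The remaining two summands carry the singular factors $\varepsilon^{-1}$ and $\varepsilon^{-2}$, and handling them is the real content of the statement. In the magnetic term I bound $\vert\gi^{00}\vert$ and $\vert\dot y^{b}_\varepsilon\vert$ as above and invoke Lemma \ref{Lema3}(2), which gives $\vert\Fi_{0a}(z_\varepsilon,y_\varepsilon)\vert\leq C_1\,\varepsilon$: the factor $\varepsilon$ exactly cancels $\varepsilon^{-1}$, leaving a bound free of $\varepsilon$. Likewise, in the potential term I use Lemma \ref{Lema3}(3), namely $\vert\partial_{0}u(z_\varepsilon,y_\varepsilon)\vert\leq C_2\,\varepsilon^{2}$, whose $\varepsilon^{2}$ absorbs $\varepsilon^{-2}$. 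Adding the five estimates yields a single constant $C>0$, independent of $\varepsilon$, with $\vert\ddot z_\varepsilon(\tau)\vert\leq C$ on $[-T,T]$. I do not expect a genuine obstacle here: the point is simply that Lemma \ref{Lema3}(2)--(3) were engineered so that their powers of $\varepsilon$ neutralize precisely the singular factors $\varepsilon^{-1},\varepsilon^{-2}$ appearing in the equation of motion, the regular coefficients being controlled by compactness.
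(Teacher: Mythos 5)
Your proof is correct and follows essentially the same route as the paper's: isolate $\ddot z_\varepsilon$ from the Euler--Lagrange equation of Lemma \ref{Lema5}, bound the Christoffel symbols and $\gi^{00}$ by continuity on the fixed compact set from Corollary \ref{Cor1}, bound the velocities by Lemma \ref{Lema3}(1), and use Lemma \ref{Lema3}(2)--(3) to cancel the singular factors $\varepsilon^{-1}$ and $\varepsilon^{-2}$. No discrepancies worth noting.
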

\begin{proof}
All of the coefficients $\Gamma^0_{00}$,  $\Gamma^{0}_{0 b}$, $\Gamma^{0}_{a b}$ and  $g^{00}$ are continuous on $B$ hence they are bounded on the compact set $\Psi^{-1}(\overline{B_\rho(p,T\,\Vert \nabla_\rho f(p) \Vert )})$. Therefore they are bounded over the curve $(z_{\varepsilon}, y_{\varepsilon})$. By Lemma \ref{Lema3} (ii) (iii), the coefficients 
$\varepsilon^{-2}\,\partial_{0} u$ and $\varepsilon^{-1}\,\Fi_{0 b}$ are bounded on $B$ by a constant not depending on $\varepsilon>0$. 
The velocities $\dot z_{\varepsilon}$ and $\dot y^k_{\varepsilon}$ are bounded by a constant independent of $\varepsilon$  on account of 
Lemma \ref{Lema3} (i).  
Therefore, by Lemma \ref{Lema5} the same occurs with the accelerations and we have the result.
\end{proof}

\begin{proposition}\label{Clave}
There is a continuous curve $x:[-T,T]\rightarrow M$ with $x(0)=p$ and a sequence $(\varepsilon_j)$ such that $\varepsilon_j>0$, $\varepsilon_j\to 0^{+}$, $f\circ x_{\varepsilon_j}\rightarrow f\circ x$ in $C^{1}[-T,T]$ and the derivative of $f\circ x$ at zero is strictly positive.
\end{proposition}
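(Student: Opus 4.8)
The plan is to promote the uniform ($C^0$) convergence already produced in Corollary \ref{Cor3} to $C^1$ convergence of the single coordinate $f\circ x_\varepsilon$, and then to read off the positivity of the limiting derivative at the origin directly from the fixed initial velocity. First I would record the elementary but crucial identification: by Lemma \ref{Lema2}(2) the first coordinate of $x_\varepsilon$ is exactly $z_\varepsilon=f\circ x_\varepsilon$, so that $z_\varepsilon(0)=f(p)=0$ and, differentiating the relation $z_\varepsilon(\tau)=f(x_\varepsilon(\tau))$, one gets $\dot z_\varepsilon(0)=df_p(\dot x_\varepsilon(0))=df_p(\nabla_\rho f(p))=\Vert\nabla_\rho f(p)\Vert^2$. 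Since $f$ is regular at $p$, this is a strictly positive constant $c$, and it is the \emph{same} constant for every $\varepsilon>0$; this observation already contains the positivity claim, once convergence of the derivatives at $0$ is established.

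Next I would upgrade the convergence. Starting from the sequence $\varepsilon_j\to 0^+$ of Corollary \ref{Cor3}, for which $x_{\varepsilon_j}\to x$ uniformly with $x([-T,T])\subset M$, continuity of $f$ on the relevant compact set gives $z_{\varepsilon_j}=f\circ x_{\varepsilon_j}\to f\circ x$ uniformly on $[-T,T]$. To control the derivatives I would apply the Arzel\`a--Ascoli Theorem to the family $\{\dot z_{\varepsilon_j}\}$: by Lemma \ref{Lema3}(i) these functions are uniformly bounded, while by Corollary \ref{Cor4} their derivatives $\ddot z_{\varepsilon_j}$ are bounded by a constant independent of $\varepsilon$, so the family is uniformly Lipschitz, hence equicontinuous. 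Passing to a further subsequence (which I do not relabel) I obtain $\dot z_{\varepsilon_j}\to w$ uniformly for some continuous $w$.

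It then remains to identify $w$ with the derivative of the limit. Since $z_{\varepsilon_j}\to f\circ x$ uniformly and $\dot z_{\varepsilon_j}\to w$ uniformly, the standard theorem on term-by-term differentiation of a uniformly convergent sequence shows that $f\circ x$ is $C^1$ with $\tfrac{d}{d\tau}(f\circ x)=w$; equivalently $z_{\varepsilon_j}\to f\circ x$ in $C^1[-T,T]$. Evaluating at the origin and using that $\dot z_{\varepsilon_j}(0)=c$ for every $j$, I conclude $\tfrac{d}{d\tau}(f\circ x)(0)=w(0)=\lim_j \dot z_{\varepsilon_j}(0)=c=\Vert\nabla_\rho f(p)\Vert^2>0$, which is the assertion of the proposition.

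I do not expect a genuine obstacle here, as all the analytic input has been assembled in Lemmas \ref{Lema1}, \ref{Lema3} and Corollary \ref{Cor4}; the only delicate point is the equicontinuity of the derivatives $\{\dot z_{\varepsilon_j}\}$, which is precisely where the uniform bound on $\ddot z_{\varepsilon_j}$ from Corollary \ref{Cor4} is indispensable. That bound, in turn, rests entirely on the two smallness estimates $\vert\Fi_{0a}\vert\leq C_1\,\varepsilon$ and $\vert\partial_0 u\vert\leq C_2\,\varepsilon^2$ of Lemma \ref{Lema3}, which are exactly what tames the singular terms $\varepsilon^{-1}\gi^{00}\Fi_{0b}\dot y^b_\varepsilon$ and $\varepsilon^{-2}\gi^{00}\partial_0 u$ in equation \eqref{Equation_y}; without them no $C^1$ control on $z_\varepsilon$ could survive the limit $\varepsilon\to 0^+$.
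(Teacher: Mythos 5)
Your proposal is correct and follows essentially the same route as the paper: identify $f\circ x_{\varepsilon}=z_{\varepsilon}$ via Lemma \ref{Lema2}(2), note that $\dot z_{\varepsilon}(0)=\Vert\nabla_\rho f(p)\Vert^{2}$ is fixed, apply Arzel\`a--Ascoli to $\{\dot z_{\varepsilon_j}\}$ using the uniform bound on $\ddot z_{\varepsilon_j}$ from Corollary \ref{Cor4}, and pass to the limit. The only cosmetic difference is that you invoke the term-by-term differentiation theorem where the paper writes out the integral representation $z_{\varepsilon_j}(\tau)=\int_0^{\tau}\dot z_{\varepsilon_j}(s)\,ds$ explicitly; these are the same argument.
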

\begin{proof}
By Lemma \ref{Lema2}(ii) we have

$$f(x_{\varepsilon}(t))=f(\Psi(z_{\varepsilon}(t),y_{\varepsilon}(t))=z_{\varepsilon}(t).$$

Since $\dot x_{\varepsilon}(0)= \nabla_\rho f(p)$ for every $\varepsilon>0$ we conclude that $\dot{z}_\varepsilon(0)=\Vert \nabla_\rho f(p) \Vert^{2}$
for every $\varepsilon>0$. 
Let $\varepsilon_j$ be the sequence obtained in  Corollary \ref{Cor3}. 

By  Corollary \ref{Cor4} and Arzel\`a--Ascoli Theorem, taking a subsequence if necessary, we have $\dot{z}_{\varepsilon_j}\rightarrow e$ uniformly on $[-T,T]$ for some continuous function $e$. Because $\dot{z}_\varepsilon(0)=\Vert \nabla_\rho f(p) \Vert^{2}$ is independent of $\varepsilon>0$, we have $e(0)=\Vert \nabla_\rho f(p) \Vert^{2}$. For every $j$ we have
$$z_{\varepsilon_j}(\tau)= \int_0^{\tau} ds\ \dot{z}_{\varepsilon_j}(s)$$
and taking the limit as $j\to +\infty$,
$$z(\tau)= \int_0^{\tau} ds\ e(s).$$
In particular, $\dot{z}=e$ is continuous and $\dot{z}(0)=\Vert \nabla_\rho f(p) \Vert^{2}>0$. This concludes the proof.
\end{proof}

\begin{proof}[Proof of the Main Theorem]
By Proposition \ref{Clave}, $f\circ x|_{[0,T]}$ attains a maximum $M>0$ at $\tau_*>0$ and there is $j_0$ such that $f(x_{\varepsilon_j}(\tau_*))>M/2$ if $j\geq j_0$. Therefore, $f^{-1}(-M/2,M/2)$ is a neighborhood of $p$ and
$$x^{\varepsilon_j}(\tau_*/\varepsilon_j)\notin f^{-1}(-M/2,M/2),\qquad j\geq j_0$$
while $(x^{\varepsilon_j}(0), \dot{x}^{\varepsilon_j}(0))= (p, \varepsilon_j \nabla_\rho f(p))\rightarrow (p,{\bf 0})$ as $j\to +\infty$. We conclude that $(p,{\bf 0})$ is a Lyapunov unstable equilibrium point. The proof is complete.
\end{proof}

\section{Proof of Corollary \ref{Generalization}}\label{Corollary_proof}

Along this section, we will simply denote as $\nabla$ the gradient with respect to the metric $\rho$ and all the norms and inner product will refer to this metric as well.

Consider a point $p$ in $M$. We will show that under the hypothesis of the corollary, the hypothesis of Theorem \ref{main1} hold on $p$ hence $(p,{\bf 0})$ is Lyapunov unstable.

The argument for the construction of a local parameterization around the zero potential point $p$ is almost verbatim to the one before in Lemma \ref{Lema2} except for the fact that now, every $F_i$ generates a flow $\phi^{i}$ as follows
\begin{equation}\label{Cauchy4}
\partial_t \phi^{i}= \frac{\nabla F_i}{\Vert \nabla F_i \Vert^{2}}(\phi^{i}),\ \phi^{i}(0,x)= x,\ x\in \R^{n}-Crit(F_i).
\end{equation}
In particular, because of the orthogonality of the gradients $\nabla F_i$, we have the identity
$$\partial_t(F_j\circ\phi^{j})= \langle\,\nabla F_j(\phi^{i})\, , \, \partial_t\phi^{i}\,\rangle= \delta^{\,i}_j$$
and integrating this expression we immediately conclude that
\begin{equation}\label{gen_eq1}
F_j\circ\phi^{i}(t, x)= \delta_j^{i}\, t+ F_j(\phi^{i}(0,x))= \delta_j^{\,i}\, t+ F_j(x)
\end{equation}
where $\delta_j^{\,i}$ is the Kronecker's delta on the indices.

Consider a local parameterization $(V, \psi)$ of $M$ centered at $p$ with $V\subset \R^{n-k}$ and define $(W\times V\, , \, \Psi)$ where $W$ is some small enough neighborhood of the origin in $\R^{k}$ and $\Psi$ is defined by the following expression
\begin{equation}\label{Def_Psi_2}
\Psi(\boldsymbol{r}, y)= \phi^{1}_{r_1}\circ\ldots\circ \phi^{k}_{r_k}(\psi(y)).
\end{equation}
Here $\boldsymbol{r}$ denotes the vector $(r_1,\ldots, r_k)$ and we have defined $\phi^{i}_t(x)=\phi^{i}(t,x)$. An almost verbatim argument as in Lemma \ref{Lema2} shows that $(W\times V\, , \, \Psi)$ is a local parameterization and it is an extension of $(V, \psi)$ for $\Psi({\bf 0}, y)=\psi(y)$ for every $y$ in $V$.

Because of the commutativity hypothesis on the vector fields, the respective flows commute and they can be arranged whatsoever in the expression \eqref{Def_Psi_2}. In particular, we have the expression
\begin{equation}\label{partial_Psi_radial}
\partial_{r_i}\,\Psi(\boldsymbol{r}, y)= \frac{\nabla F_i}{\Vert \nabla F_i \Vert^{2}}\left(\Psi(\boldsymbol{r}, y)\right),\qquad i=1,\ldots, k.
\end{equation}

Because of the identities \eqref{gen_eq1}, which follow from the perpendicularity hypothesis on the vector fields, it follows that
\begin{equation}\label{Psi_enderza}
F(\Psi(\boldsymbol{r}, y))= \boldsymbol{r}.
\end{equation}
In particular, because of the identities \eqref{partial_Psi_radial}, \eqref{Psi_enderza} and the perpendicularity hypothesis on the vector fields, the pullback of the metric $\rho$ by the map $\Psi$ reads as follows
\begin{equation}\label{metric_pullback}
\Psi^{*}(\rho)= \gi_{11}\, dr_1\otimes dr_1 +\ldots + \gi_{kk}\, dr_k\otimes dr_k + \sum _{a,b=k+1}^{n}\,\gi_{ab}\,dy^{a-k}\otimes dy^{b-k}
\end{equation}
where Einstein's convention on repeated indices were not used in the last equality. Thinking of the metric as a matrix $(\gi_{ij})$, it has two blocks only: the upper left and the lower right. The first of these blocks is diagonal and the same structure appears in the inverse matrix $(\gi^{ij})$.

Now we define the function $f$ simply as one of the $y$ coordinates. Specifically, choose any coordinate $y^{j}$ and define
$$f= \pi_{k+j}\circ \Psi^{-1}:\Psi(W\times V)\rightarrow \R$$
where $\pi_{k+j}$ is the projection on the $(k+j)$-th coordinate, that is to say the $y^{j}$ coordinate. Denote the pullbacks of $U$ and $f$ by the map $\Psi$ as $\tilde{U}$ and $\tilde{f}$ respectively and note that $\tilde{U}= g\otimes {\bf 1}_{n-k}$ and does not depend on the $y$ coordinates while $\tilde{f}$ is just the $y^{j}$ coordinate where we have denoted by ${\bf 1}_{n-k}$ the unit constant on $\R^{n-k}$. Then,
$$\langle\,\nabla U, \nabla f\,\rangle\circ \Psi= \gi^{ab}\,\partial_a\tilde{U}\,\partial_b\tilde{f}= \gi^{aj}\,\partial_a\tilde{U}=0$$
for $\partial_a\tilde{U}=0$ when the index $a$ runs on $n-k+1,\ldots, n$ and $\gi^{aj}=0$ when it runs on $1,\ldots, k$ hence every term of the last equality is zero. Because $\Psi$ is a diffeomorphism, we conclude that
$$\langle\,\nabla U, \nabla f\,\rangle = 0$$
on the neighborhood $\Psi(W\times V)$ of the point $p$. The proof is complete.

\section{Proof of Corollary \ref{Generalization3}}\label{Corollary3_proof}

This is a continuation of the proof written in the previous section. Consider an arbitrary one\,-form in $\R^k$
$$\omega= \sum_{l=1}^k\,c_l\,dx^l$$
where $c_l$ are arbitrary smooth functions on $\R^k$. Then, because of the identity \eqref{Psi_enderza} we have the one\,-form in $W\times V$
$$\Psi^*(\mu)=\Psi^*F^*(\omega)=(F\circ\Psi)^*(\omega)=\sum_{l=1}^k\,c_l(r_1,\ldots, r_k)\,dr_l$$
whose exterior differential reads as
\begin{equation}\label{id0}
\Psi^*(d\mu)=d\,\Psi^*(\mu)=\sum_{i,l=1}^k\,\partial_i c_l(r_1,\ldots, r_k)\,dr_i\wedge dr_l.
\end{equation}

By definition of the inner contraction, we have the identity
\begin{equation}\label{id1}
\Psi^*(\iota_{\nabla f}\,d\mu)=\iota_{(\nabla f)_*}\,\Psi^*(d\mu),\qquad (\nabla f)_*= d\Psi^{-1}(\nabla f).
\end{equation}
We claim that the field $(\nabla f)_*$ coincides with the gradient of $\Psi^*\,f$ with respect to the metric $\Psi^{*}(\rho)$. In effect, for every vector $v$ in $T(W\times V)$ we have
$$(d\,\Psi^*f)(v)= (\Psi^*\,df)(v)= \langle\,\nabla f, d\Psi(v)\,\rangle_\rho= \langle\,d\Psi\left((\nabla f)_*\right), d\Psi(v)\,\rangle_\rho$$
$$= \langle\,(\nabla f)_*\,,\, v\,\rangle_{\Psi^*(\rho)}$$
and the non degeneracy of the metric proves the claim. In particular, because of the expression \eqref{metric_pullback} for the pullback of the metric and recalling that $f$ is just the $y_j$ coordinate with respect to $\Psi$, we conclude the vanishing of the radial components of the vector field $(\nabla f)_*$ for
$$(\nabla f)_*^a= \sum_b\,\gi^{ab}\,\partial_b \tilde{f}= \gi^{aj},$$
that is to say
\begin{equation}\label{id2}
(\nabla f)_*= \sum_{a=k+1}^n\,\gi^{aj}\,{\bf e}_a
\end{equation}
where we have denoted by ${\bf e}_a$ the canonical vector field corresponding to the $y^{a-k}$ tangential coordinate.

However, the inner contraction of any two\,-form $dr_i\wedge dr_l$ by a tangential canonical field ${\bf e}_a$ is identically zero for
\begin{equation}\label{id3}
\iota_{{\bf e}_a}\,\left( dr_i\wedge dr_l\right)= \left( dr_i\,({\bf e}_a)\right)\,dr_l - \left( dr_l\,({\bf e}_a)\right)\,dr_i={\bf 0},\quad a\geq k+1
\end{equation}
because the coefficients of both terms vanish.

By the identities \eqref{id0}, \eqref{id1}, \eqref{id2} and \eqref{id3} we have
$$\Psi^*(\iota_{\nabla f}\,d\mu)= \sum_{i,l=1}^k\,\sum_{a=k+1}^n\,\gi^{aj}\,\partial_i c_l(r_1,\ldots, r_k)\,\iota_{{\bf e}_a}\,\left(dr_i\wedge dr_l\right)= {\bf 0}$$
and we conclude that $\iota_{\nabla f}\,d\mu$ identically vanishes on $\Psi(W\times V)$ hence by Theorem \ref{main} the point $(p, {\bf 0})$ is Lyapunov unstable.

Finally, because the origin is a regular value of $F$, for every point $x$ in $M$ the set containing the differentials $d_x\,F_i$ is linearly independent hence the form
$$d\mu=d\,F^*(\omega)=F^*(d\omega)=\sum_{i,l=1}^k\,\partial_i c_l(F_1,\ldots, F_k)\,dF_i\wedge dF_l$$
is non null at every point in $M$ provided $d\omega$ is non null at the origin. This finishes the proof.


\medskip
Received xxxx 20xx; revised xxxx 20xx.
\medskip

\end{document}